\numberwithin{equation}{section}
\newtheorem{thm}{Theorem}[section]
\newtheorem{lem}[thm]{Lemma}
\newtheorem{prop}[thm]{Proposition}
\newtheorem{rem}[thm]{Remark}
\def\a{\alpha}
\def\b{\beta}
\def\d{\delta}
\def\D{\Delta}
\def\g{\gamma}
\def\G{\Gamma}
\def\na{\nabla}
\def\om{\omega}
\def\s{\sigma}
\def\th{\theta}
\def\ve{\varepsilon}
\def\wt{\widetilde}
\def\bb{\mathbb}
\def\cut{\setminus}
\DeclareMathOperator*\Res{{Res}} \DeclareMathOperator\A{\bf{A}}
\DeclareMathOperator\Ai{{Ai}} \DeclareMathOperator\Bi{{Bi}}
\DeclareMathOperator\re{{Re}} \DeclareMathOperator\im{{Im}}
\def\AND{\qquad\mbox{and}\qquad}
\def\C{\mathbb C}
\def\N{\mathbb N}
\begin{document}
\title{Global Asymptotics of the Meixner Polynomials}
\author{X.-S. Wang\thanks{Corresponding author. E-mail address: xswang4@mail.ustc.edu.cn}~ and R. Wong\\
Department of Mathematics\\ City University of Hong Kong\\ Tat
Chee Avenue, Kowloon, Hong Kong}
\date{}

\maketitle
\begin {center}
Dedicated to Professor Lee Lorch on his ninety-fifth birthday
\end {center}

\begin{abstract}
Using the steepest descent method for oscillatory Riemann-Hilbert problems introduced by Deift and Zhou [Ann. Math. {\bf 137}(1993), 295-368], we derive asymptotic formulas for the Meixner polynomials in two regions of the complex plane
separated by the boundary of a rectangle. The asymptotic formula on the boundary of the rectangle is obtained by taking limits from either inside or outside. Our results agree with the ones obtained earlier for $z$ on the positive real line by using the steepest descent method for integrals [Constr. Approx. {\bf 14}(1998), 113-150].
\end{abstract}
\vfill \noindent {\bf AMS Subject Classification:} Primary 41A60; Secondary 33C45.

\noindent \textbf{Keywords}: Global asymptotics; Meixner polynomials; Riemann-Hilbert problems; Airy function.
\newpage

\section{Introduction}
\indent
In this paper, we investigate the asymptotic behavior of the
Meixner polynomials. These
polynomials have many applications in statistical physics. For
instance, they are used in the study of the
shape fluctuations in a certain two dimensional random growth model;
see \cite{Jo00} and the references therein.

For $\b>0$ and $0<c<1$, the Meixner polynomials are given by
\begin{eqnarray}\label{Meixner}
  m_n(z;\b,c)=(\b)_n\cdot{}_2F_1(-n,-z;\b;1-c^{-1}).
\end{eqnarray}
They satisfy the discrete orthogonality relation
\begin{eqnarray}\label{orthogonality}
  \sum_{k=0}^\infty m_n(k;\b,c)m_p(k;\b,c)\frac{c^k(\b)_k}{k!}=(1-c)^{-\b}c^{-n}n!(\b)_n\d_{np}.
\end{eqnarray}
This notation is adopted in \cite[$\S$ 10.24]{EMOT53} and also in \cite{JW98}.

Using probabilistic arguments, Maejima and Van Assche \cite{MV85} have given an asymptotic formula
for $m_n(n\a;\b,c)$ when $\a<0$ and $\b$ is a positive integer.
Their result is in terms of elementary functions.
By using the steepest-descent method for integrals, Jin and Wong \cite{JW98} have derived two infinite asymptotic expansions for $m_n(n\a;\b,c)$;
one holds uniformly for $0<\ve\le \a\le 1+\ve$, and the other holds uniformly for $1-\ve\le \a\le M<\infty$.
Both expansions involve the parabolic cylinder function and its derivative.

In view of Gauss's contiguous relations for hypergeometric functions \cite[$\S$ 15.2]{AS70} and the connection formula [8] $$m_{n}(-x-\beta;\beta,c^{-1})=c^{n}m_{n}(x;\beta,c),$$
\noindent we may restrict our study to the case $1\le\b<2$ and $0<c<1$.
Fixing $0<c<1$ and $1\le\b<2$, we intend to investigate the large-$n$ behavior of
$m_n(nz-\b/2;\b,c)$ for $z$ in the whole complex plane, including neighborhood of the origin and regions extending to infinity. Our approach is based on the steepest-descent method for oscillatory Riemann-Hilbert problems,
first introduced by Deift and Zhou \cite{DZ93} for nonlinear partial differential equations, later developed in \cite{DKMVZ99} for orthogonal polynomials
with respect to exponential weights,
and further extended in \cite{BKMM03,BKMM07} to a general class of discrete orthogonal polynomials.

A direct application of the method in \cite{BKMM03,BKMM07} would, however, only give local asymptotics.
For instance, in the case of Meixner polynomials, one would have to divide the complex plane into at least six regions
(one near the origin, two near the two turning points and three in between, including an unbounded one), and
give correspondingly six different asymptotic formulas.
To reduce the number of these regions,
we shall make some modifications to the method in [2,$\,\,$3]. Our approach is motivated by the previous work in [4, 13, 15, 17, 18], and the main idea is to extend, as large as possible, the two regions of validity of the two asymptotic formulas near the two turning points. There have already been several examples in which we only need two regions with appropriate asymptotic formulas to cover the entire plane;
the Hermite polynomial [17] is one of such examples.
However, for discrete orthogonal polynomials, there might be cuts starting from the finite endpoints of the intervals of orthogonality. For instance, in the case of Krawtchouk polynomials considered in [4], there are two cuts $(-\infty,0]$ and $[1,\infty)$ where no asymptotic formulas are given. In the present paper, we shall give two asymptotic formulas for the
Meixner polynomial $m_{n}(nz-\beta/2;\beta,c)$, one valid inside a rectangle with two vertical boundary lines passimg through $z=0$ and $z=1$, and the other valid outside the rectangle. Both formulas can be extended slightly beyond the boundary of the rectangle, and they are asymptotically equal to each other in the overlapping region.
The material in this paper is arranged as follows.
In Section 2, we use a standard method to
relate the Meixner polynomials to a Riemann-Hilbert problem for a matrix-valued function.
The motivation and details of this standard procedure can be found in \cite{BKMM03,BKMM07} and the reference given there.
In Section 3, we introduce some auxiliary functions which will be used in Section 4 for the construction of our parametrix.
In Section 4, we also prove that this parametrix is asymptotically equal to the solution of the Riemann-Hilbert problem formulated in Section 2.
In Section 5, we state our main result and make the
remark that our formulas agree with the ones already existing in the literature.

\section{Standard formulation of Riemann-Hilbert problem}
From (\ref{Meixner}), we note that the leading coefficient of
$m_n(z;\b,c)$ is $(1-c^{-1})^n$. Thus, the monic Meixner polynomials
are given by
\begin{eqnarray}\label{monic Meixner}
  \pi_n(z):=(1-\frac 1 c)^{-n}m_n(z;\b,c).
\end{eqnarray}
For convenience, in (2.1) we have suppressed the dependence of
$\pi_n(z)$ on $c$ and $\b$. Furthermore, throughout the paper we
shall fix the parameters $c\in(0,1)$ and $\b\in[1,2)$. The
orthogonality property of $\pi_n(z)$ can be easily derived from
(\ref{orthogonality}), and we have
\begin{eqnarray}\label{monic orthogonality}
  \sum_{k=0}^\infty \pi_n(k)\pi_p(k)w(k)=\d_{np}/\g_n^2,
\end{eqnarray}
where
\begin{eqnarray}\label{gamma}
  \g_n^2=\frac{(1-c)^{2n+\b}c^{-n}}{\G(n+\b)\G(n+1)}
\end{eqnarray}
and
\begin{eqnarray}\label{weight}
  w(z):=\frac{\G(z+\b)}{\G(z+1)}c^z.
\end{eqnarray}
Let $P(z)$ be the $2\times2$ matrix defined by
\begin{eqnarray}\label{P}
  P(z)=\left(\begin{matrix}
    P_{11}(z)& P_{12}(z)\\
    \\
    P_{21}(z)& P_{22}(z)
  \end{matrix}\right):=\left(\begin{matrix}
    \pi_n(z)&\sum\limits_{k=0}^\infty\cfrac{\pi_n(k)w(k)}{z-k}\\
    \\
    \g_{n-1}^2\pi_{n-1}(z)&\sum\limits_{k=0}^\infty\cfrac{\g_{n-1}^2\pi_{n-1}(k)w(k)}{z-k}
  \end{matrix}\right).
\end{eqnarray}
A proof of the following result can be found in \cite[Section
1.5.1]{BKMM07}. The only difference is that their $N$ should be
replaced by $\infty$.\\

\begin{prop}\label{prop-P}
The matrix-valued function $P(z)$ defined in (\ref{P}) is the unique solution of the
following interpolation problem:
\begin{enumerate}[(P1)]
\item  $P(z)$ is analytic in
  $\bb{C}\setminus\bb{N}$;
\item at each $z=k\in\bb{N}$, the first
column of
  $P(z)$ is analytic and the second column of $P(z)$ has a
  simple pole with residue
  \begin{eqnarray}\label{P2}
  \Res_{z=k}P(z)=\lim_{z\to k}P(z)\left(\begin{matrix}
    0&w(z)\\
    0&0
  \end{matrix}\right)=\left(\begin{matrix}
    0\ &w(k)P_{11}(k)
    \\
    0&w(k)P_{21}(k)
  \end{matrix}\right);
  \end{eqnarray}
\item for $z$ bounded away from $\bb{N}$,  $P(z)\left(\begin{matrix}
    z^{-n}&0\\
    0&z^n
  \end{matrix}\right)=I+O(|z|^{-1})$ as $z\to\infty$.
\end{enumerate}
\end{prop}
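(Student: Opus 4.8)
The plan is to verify that the matrix $P(z)$ in (\ref{P}) satisfies (P1)--(P3), and then to establish uniqueness separately. For (P1), I would note that $\pi_n$ and $\pi_{n-1}$ are entire, so the only possible singularities of $P$ come from the series in the second column; each such series is a sum of simple poles at the nonnegative integers, and one checks (using the known decay of $\pi_n(k)w(k)$ coming from the orthogonality relation (\ref{monic orthogonality}) and the ratio-test behavior of $w(k)=\G(k+\b)c^k/\G(k+1)$ with $0<c<1$) that the series converges locally uniformly on $\C\setminus\N$, hence defines an analytic function there. For (P2), near a fixed $z=k\in\N$ only the single term with that index is singular; its residue is $\pi_n(k)w(k)$ in the $(1,2)$ entry and $\g_{n-1}^2\pi_{n-1}(k)w(k)$ in the $(2,2)$ entry, which is exactly $\lim_{z\to k}P(z)\left(\begin{smallmatrix}0&w(z)\\0&0\end{smallmatrix}\right)$ since the first column is analytic at $k$ and evaluates to $\pi_n(k)$, $\g_{n-1}^2\pi_{n-1}(k)$ there. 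This gives the stated residue matrix.

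For (P3), the first column is immediate: $\pi_n(z)z^{-n}=1+O(|z|^{-1})$ because $\pi_n$ is monic of degree $n$, and $\g_{n-1}^2\pi_{n-1}(z)z^{-n}=O(|z|^{-1})$ since $\pi_{n-1}$ has degree $n-1$. For the second column I would expand $1/(z-k)=\sum_{j\ge 0}k^j z^{-j-1}$ for $|z|$ large and bounded away from $\N$, interchange summation (justified by the convergence established in (P1)), and recognize the coefficient of $z^{-j-1}$ as the moment $\sum_k \pi_n(k)k^j w(k)$. By the orthogonality (\ref{monic orthogonality}), these moments vanish for $j=0,\dots,n-1$, so $P_{12}(z)=O(|z|^{-n-1})$; hence $P_{12}(z)z^n=O(|z|^{-1})$. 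For $P_{22}$, the first nonvanishing moment is the one against $\pi_{n-1}$ of degree $n-1$, namely $\sum_k \pi_{n-1}(k)^2 w(k)=1/\g_{n-1}^2$, so $\g_{n-1}^2 P_{22}(z)=z^{-n}+O(|z|^{-n-1})$, giving $\g_{n-1}^2 P_{22}(z)z^n=1+O(|z|^{-1})$. Thus $P(z)\,\mathrm{diag}(z^{-n},z^n)=I+O(|z|^{-1})$.

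For uniqueness, suppose $\widetilde P$ is another solution and set $R(z)=\widetilde P(z)P(z)^{-1}$. Since $\det P(z)$ is entire (the pole structure of the two columns cancels in the determinant) and equals $1+O(|z|^{-1})$ at infinity by (P3), Liouville's theorem gives $\det P\equiv 1$, so $P^{-1}$ exists and has at most the reciprocal pole structure; a local computation at each $k\in\N$ using (P2) shows that $R$ is in fact analytic across every $k$ (the would-be pole of $\widetilde P$ is killed by the matching zero structure of $P^{-1}$, because both have the same residue condition with the same nilpotent matrix). Hence $R$ is entire, and by (P3) $R(z)\to I$ as $z\to\infty$, so $R\equiv I$ by Liouville. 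The main obstacle I expect is the bookkeeping in this last step: carefully checking that the combination of the simple-pole condition (P2) for $\widetilde P$ and the corresponding zero/pole structure of $P^{-1}$ really does produce a removable singularity at each node, rather than merely a pole of lower order. This is the place where the precise form of the residue relation (with the fixed nilpotent factor $\left(\begin{smallmatrix}0&w(z)\\0&0\end{smallmatrix}\right)$) is essential, and it is exactly the point treated in \cite[Section 1.5.1]{BKMM07}.
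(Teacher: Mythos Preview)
Your proposal is correct and is precisely the standard argument from \cite[Section~1.5.1]{BKMM07} that the paper itself invokes (the paper does not supply its own proof here but simply cites that reference, noting only that their $N$ is to be replaced by $\infty$). One minor slip: in your treatment of the $(2,2)$ entry you write $\g_{n-1}^2 P_{22}(z)z^n=1+O(|z|^{-1})$, but $P_{22}$ as defined in (\ref{P}) already carries the factor $\g_{n-1}^2$, so the correct statement is simply $P_{22}(z)z^n=1+O(|z|^{-1})$.
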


Let $\bb{X}$ denote the set defined by
\begin{eqnarray}\label{X}
  \bb{X}:=\{X_k\}_{k=0}^\infty,\qquad \mbox{where}\qquad
  X_k:=\frac{k+\b/2}n;
\end{eqnarray}
cf. [3], [4] and [13]. The $X_k$'s are called {\it nodes}. For the
sake of simplicity, we put
\begin{eqnarray}
  B(z):=\prod\limits_{j=0}^{n-1}(z-X_j).\label{B}
\end{eqnarray}
Our first transformation is given by
\begin{eqnarray}\label{Q}
  Q(z)&:=&n^{-n\s_3}P(nz-\b/2)B(z)^{-\s_3}\nonumber
  \\&=&\left(\begin{matrix}
    n^{-n}&0\\
    0&n^n
  \end{matrix}\right)P(nz-\b/2)\left(\begin{matrix}
    B(z)^{-1}&0\\
    0&B(z)
  \end{matrix}\right),
\end{eqnarray}
where $\s_3:=\left(\begin{matrix}
    1&0\\
    0&-1
  \end{matrix}\right)$ is a Pauli matrix.
In this paper, we shall also make use of another Pauli matrix,
namely $\s_1:=\left(\begin{matrix}
    0&1\\
    1&0
  \end{matrix}\right)$; see Section 4.

\begin{prop}\label{prop-Q}
The matrix-valued function $Q(z)$ defined in (\ref{Q}) is the unique solution of the
following interpolation problem:
\begin{enumerate}[(Q1)]
\item $Q(z)$ is analytic in
  $\mathbb{C}\setminus \mathbb{X}$;
\item at each node $X_k$ with $k\in\mathbb{N}$ and $k\ge n$, the
  first column of $Q(z)$ is analytic and the second column of $Q(z)$ has a
  simple pole with residue
  \begin{eqnarray}\label{Q2a}
  \Res_{z=X_k}Q(z)=\lim_{z\to X_k}Q(z)\left(\begin{matrix}
    0&w(nz-\b/2)B(z)^2\\
    0&0
  \end{matrix}\right);
  \end{eqnarray}
  at each node $X_k$ with $k\in\mathbb{N}$ and $k< n$, the
  second column of $Q(z)$ is analytic and the first column of $Q(z)$ has a
  simple pole with residue
  \begin{eqnarray}\label{Q2b}
  \Res_{z=X_k}Q(z)=\lim_{z\to X_k}Q(z)\left(\begin{matrix}
    0&0\\
    \\
    \cfrac{(z-X_k)^2}{w(nz-\b/2)B(z)^2}&0
  \end{matrix}\right);
  \end{eqnarray}
\item for $z$ bounded away from $\bb{X}$, $Q(z)=I+O(|z|^{-1})$ as $z\to\infty$.
\end{enumerate}
\end{prop}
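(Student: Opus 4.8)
The plan is to read off properties (Q1)--(Q3) for $Q$ directly from properties (P1)--(P3) for $P$ in Proposition~\ref{prop-P}, exploiting that the substitution (\ref{Q}) is explicit and invertible, with inverse $P(\zeta)=n^{n\s_3}\,Q\!\big((\zeta+\b/2)/n\big)\,B\!\big((\zeta+\b/2)/n\big)^{\s_3}$. Throughout set $\zeta=nz-\b/2$, so that $z=X_k$ corresponds precisely to $\zeta=k$ with $\zeta-k=n(z-X_k)$, and recall that $B(z)=\prod_{j=0}^{n-1}(z-X_j)$ is a monic degree-$n$ polynomial whose zeros are exactly the nodes $X_0,\dots,X_{n-1}$. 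Uniqueness of $Q$ is then automatic: any solution of the $Q$-problem is pulled back by the inverse of (\ref{Q}) to a solution of the $P$-problem, which is unique.

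Property (Q1) is immediate, since $n^{-n\s_3}$ is constant and $B(z)^{\pm\s_3}$ is diagonal with rational entries having poles only among $X_0,\dots,X_{n-1}\subset\mathbb{X}$, so $n^{-n\s_3}P(nz-\b/2)B(z)^{-\s_3}$ is analytic wherever $P(nz-\b/2)$ is. For (Q3) one substitutes the normalization from (P3): $P(\zeta)\,\mathrm{diag}(\zeta^{-n},\zeta^{n})=I+O(1/|\zeta|)$ gives, entrywise, $P_{11}(\zeta)=\zeta^{n}(1+O(1/\zeta))$, $P_{22}(\zeta)=\zeta^{-n}(1+O(1/\zeta))$, $P_{12}(\zeta)=O(\zeta^{-n-1})$, $P_{21}(\zeta)=O(\zeta^{n-1})$; combining with $(nz-\b/2)^{\pm n}=n^{\pm n}z^{\pm n}(1+O(1/z))$ and $B(z)^{\pm1}=z^{\pm n}(1+O(1/z))$, the factor $n^{-n\s_3}$ cancels the stray $n^{\pm n}$ while $B(z)^{-\s_3}$ plays the role of $\mathrm{diag}(z^{-n},z^{n})$, and a check of the four entries gives $Q(z)=I+O(|z|^{-1})$.

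The substance is (Q2), and the nodes must be split into $k\ge n$ and $k<n$. For $k\ge n$ one has $B(X_k)\ne0$, so the only singularity of $Q$ at $X_k$ is the simple pole inherited from the second column of $P(nz-\b/2)$. Since (P2) forces the singular parts to coincide, one can factor $P(\zeta)=P^{\mathrm{an}}(\zeta)\left(\begin{smallmatrix}1&w(\zeta)/(\zeta-k)\\0&1\end{smallmatrix}\right)$ with $P^{\mathrm{an}}$ analytic at $\zeta=k$; conjugating the triangular factor through $n^{-n\s_3}$ on the left and $B(z)^{-\s_3}$ on the right turns its $(1,2)$-entry $w(\zeta)/(\zeta-k)$ into $w(nz-\b/2)B(z)^2/(\zeta-k)$, so that, since $\zeta-k=n(z-X_k)$, $Q(z)=Q^{\mathrm{an}}(z)\left(\begin{smallmatrix}1&w(nz-\b/2)B(z)^2/(n(z-X_k))\\0&1\end{smallmatrix}\right)$ with $Q^{\mathrm{an}}$ analytic at $X_k$; reading off the residue---equivalently, multiplying on the right by $\left(\begin{smallmatrix}0&w(nz-\b/2)B(z)^2\\0&0\end{smallmatrix}\right)$ and letting $z\to X_k$---produces (\ref{Q2a}), while the first column stays analytic because that of $P$ does and $B^{-1}$ is regular at $X_k$. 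For $k<n$ the roles swap: $B$ has a simple zero at $X_k$, which annihilates the second-column pole of $P(nz-\b/2)$ (making the second column of $Q$ analytic there) but, through the $B(z)^{-1}$ sitting in the first column of $Q$, creates a simple pole in that first column. Computing this residue from the analytic first column of $P$ and the simple zero of $B$, and recognising the answer as $\lim_{z\to X_k}Q(z)\left(\begin{smallmatrix}0&0\\(z-X_k)^2/(w(nz-\b/2)B(z)^2)&0\end{smallmatrix}\right)$---the square in the numerator being exactly what cancels the double pole of $P_{12}(nz-\b/2)/B(z)$ at $X_k$---yields (\ref{Q2b}).

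I expect the main obstacle to be precisely this bookkeeping at the nodes $X_k$ with $k<n$: one must track how the simple zero of $B$ removes the pole of $P$ in one column while simultaneously producing a pole in the other column, and then coax the residue into the prescribed form involving $(z-X_k)^2/(w(nz-\b/2)B(z)^2)$. Keeping the relation $\zeta-k=n(z-X_k)$ explicit at every step, so that all change-of-variable factors of $n$ are correctly accounted for, is what keeps this from becoming error-prone.
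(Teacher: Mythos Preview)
Your strategy matches the paper's one-line proof exactly: read off (Q1)--(Q3) from (P1)--(P3) via the explicit definition~(\ref{Q}). The analyticity and large-$z$ arguments are fine. But in the step you yourself flagged as the danger zone---tracking the factor of $n$ coming from $\zeta-k=n(z-X_k)$---you slip.

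For $k\ge n$ you correctly arrive at
\[
Q(z)=Q^{\mathrm{an}}(z)\begin{pmatrix}1&\dfrac{w(nz-\b/2)\,B(z)^2}{n(z-X_k)}\\[1.2ex]0&1\end{pmatrix},
\]
with $Q^{\mathrm{an}}$ analytic at $X_k$ and sharing its first column with $Q$. Reading off the residue of the second column therefore gives the first column of $Q$ at $X_k$ multiplied by $w(k)B(X_k)^2/n$. This is \emph{not} ``equivalently, multiplying on the right by $\left(\begin{smallmatrix}0&w(nz-\b/2)B(z)^2\\0&0\end{smallmatrix}\right)$ and letting $z\to X_k$'': the latter produces the same thing without the $1/n$. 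Your two descriptions of the residue differ by precisely the change-of-variable factor you were worried about, and (\ref{Q2a}) as printed matches the second description, not the one your own computation actually yields. The identical mismatch occurs for $k<n$: a direct calculation gives $\Res_{z=X_k}Q_{11}=n^{-n}\pi_n(k)\big/\prod_{j\ne k}(X_k-X_j)$, while the right-hand limit in (\ref{Q2b}) evaluates to $1/n$ times this.

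So either there is a stray factor of $n$ in your argument that still needs to be located, or the residue conditions (\ref{Q2a})--(\ref{Q2b}) as stated are missing a $1/n$ (which would be consistent with the $n\pi$ in the definitions (\ref{nabla})--(\ref{Delta}) also being off by the same factor, so that Lemma~\ref{lem-R} survives). Either way, you should not assert that the computation ``produces (\ref{Q2a})'' without reconciling this discrepancy.
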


{\it Proof.} This is obvious from Proposition \ref{prop-P} and the
definition of $Q(z)$ in (\ref{Q}). $\blacksquare$\\

The purpose of our next transformation is to remove the poles in the
interpolation problem for $Q(z)$ (cf. \cite[Section 4.2]{BKMM07}).
Let $\d>0$ be a sufficiently small number. We define (see Figure
\ref{fig-Q2R} below)
\begin{subequations}\label{R}
  \begin{equation}
    R(z):=Q(z)\left(\begin{matrix}
    1&0\\
    -\D_\pm(z)&1
  \end{matrix}\right)
  \end{equation}
  for $\re z\in(0,1)$ and $\pm\im z\in(0,\d)$, and
  \begin{equation}
    R(z):=Q(z)\left(\begin{matrix}
    1&-\na_\pm(z)\\
    0&1
  \end{matrix}\right)
  \end{equation}
  for $\re z\in(1,\infty)$ and $\pm\im z\in(0,\d)$, and
  \begin{equation}
    R(z):=Q(z)
  \end{equation}
  for $\re z\notin[0,\infty)$ or $\im z\notin[-\d,\d]$,
\end{subequations}
where
\begin{eqnarray}
\na_\pm(z):=\cfrac{n\pi w(nz-\b/2)B(z)^2}{e^{\mp i\pi (nz-\b/2)}\sin(n\pi z-\b\pi/2)},\label{nabla}\\
\D_\pm(z):=\cfrac{e^{\pm i\pi(nz-\b/2)}\sin(n\pi z-\b\pi/2)}{n\pi w(nz-\b/2)B(z)^2}.\label{Delta}
\end{eqnarray}

\begin{lem}\label{lem-R}
  For each $k\in\bb{N}$, the singularity of $R(z)$ at the node $X_k=\frac{k+\b/2}{n}$ is removable, that is,
  $\Res\limits_{z=X_k}R(z)=0.$
\end{lem}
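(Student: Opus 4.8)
The plan is to verify directly that the residue of $R(z)$ vanishes at each node $X_k$, splitting into the two cases $k\ge n$ and $k<n$ according to whether the pole of $Q(z)$ sits in the first or the second column. Consider first a node $X_k$ with $k<n$ lying in the strip $\re z\in(0,1)$, $0<\im z<\d$ (the case $-\d<\im z<0$ being identical with $\D_-$ in place of $\D_+$). There $R(z)=Q(z)\bigl(\begin{smallmatrix}1&0\\-\D_+(z)&1\end{smallmatrix}\bigr)$, so the first column of $R$ is $(\text{first column of }Q)-\D_+(z)\cdot(\text{second column of }Q)$. Since the second column of $Q$ is analytic at $X_k$ while the first column has the simple pole described in (\ref{Q2b}), and $\D_+(z)$ itself has a simple pole at $X_k$ (because $\sin(n\pi z-\b\pi/2)$ vanishes there), I would compute
\begin{equation*}
\Res_{z=X_k}R(z)=\Res_{z=X_k}Q(z)-\Bigl(\Res_{z=X_k}\D_+(z)\Bigr)\lim_{z\to X_k}Q(z)\begin{pmatrix}0&0\\1&0\end{pmatrix}
\end{equation*}
and show the two terms cancel. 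Using (\ref{Q2b}), the first term equals $\lim_{z\to X_k}Q(z)\bigl(\begin{smallmatrix}0&0\\(z-X_k)^2/(w(nz-\b/2)B(z)^2)&0\end{smallmatrix}\bigr)$, which after extracting the pole is $\lim_{z\to X_k}\frac{(z-X_k)}{w(nz-\b/2)B(z)^2}$ times $Q(z)\bigl(\begin{smallmatrix}0&0\\1&0\end{smallmatrix}\bigr)$; on the other hand the residue of $\D_+$ comes from $\frac{d}{dz}\sin(n\pi z-\b\pi/2)=n\pi\cos(n\pi z-\b\pi/2)$ evaluated at $X_k$, and one checks $\Res_{z=X_k}\D_+(z)=\frac{e^{\pm i\pi(k+\b/2)}(\pm1)^{\,?}}{w(nX_k-\b/2)B(X_k)^2}\cdot\frac{1}{\,?\,}$ — the key arithmetic being that $e^{\pm i\pi(nX_k-\b/2)}\cos(n\pi X_k-\b\pi/2)=e^{\pm i\pi k}(\pm1)\cdot(-1)^k/\!\dots$ collapses to exactly $1/(n\pi)$ times the reciprocal of $\lim(z-X_k)^{-1}\sin$, so the two residues match and the difference is zero.

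For a node $X_k$ with $k\ge n$ in the strip $\re z\in(1,\infty)$, $0<\im z<\d$, the argument is the mirror image: there $R(z)=Q(z)\bigl(\begin{smallmatrix}1&-\na_+(z)\\0&1\end{smallmatrix}\bigr)$, the first column of $Q$ is analytic and the second column has the pole (\ref{Q2a}), while $\na_+(z)$ — being essentially the reciprocal of the factor appearing in $\D_+$ up to the sign of the exponential — has a simple pole precisely because $\sin(n\pi z-\b\pi/2)$ vanishes at $X_k$; again $\Res_{z=X_k}R(z)=\Res_{z=X_k}Q(z)-(\Res_{z=X_k}\na_+(z))\lim_{z\to X_k}Q(z)\bigl(\begin{smallmatrix}0&1\\0&0\end{smallmatrix}\bigr)$, and the same trigonometric identity forces cancellation. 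Finally, for a node $X_k$ with $\re X_k\notin[0,\infty)$ or with $k$ such that $X_k$ falls outside the $\d$-strips, we have $R(z)=Q(z)$ near $X_k$; but such nodes simply do not exist, since every $X_k=(k+\b/2)/n$ is a positive real number lying in one of the two strips once $k$ ranges over $\bb{N}$, so there is nothing to check — alternatively, I would remark that the definition of $R$ in (\ref{R}) is arranged precisely so that every node lies in a region where the appropriate triangular factor has been inserted.

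The main obstacle is the bookkeeping of signs and phases in the residue computation: one must track the $\pm$ in $e^{\mp i\pi(nz-\b/2)}$ versus $e^{\pm i\pi(nz-\b/2)}$, the factor $(-1)^k$ coming from $\cos(n\pi X_k-\b\pi/2)$ (note $n\pi X_k=(k+\b/2)\pi$ so the cosine is $\cos((k+\b/2)\pi)=(-1)^k\cos(\b\pi/2)\mp\dots$ — care is needed because $\b$ is not an integer), and the interplay with $w(nz-\b/2)B(z)^2$ which is analytic and nonzero at $X_k$ for the relevant range of $k$. Once one writes $\sin(n\pi z-\b\pi/2)=(z-X_k)\,n\pi\cos(n\pi X_k-\b\pi/2)+O((z-X_k)^2)$ and substitutes, the cancellation is forced by the algebraic identity $\D_\pm(z)\na_\pm(z)\equiv 1$ together with the fact that the residue of $Q$ in (\ref{Q2b}) carries exactly the factor $(z-X_k)^2/(wB^2)$ that pairs with the simple zero of $\sin$ in $\D_+$; I would present the computation once in full for the $k<n$ case and indicate that the $k\ge n$ case is obtained by the substitution $\D_\pm\leftrightarrow\na_\pm$ and transposing the roles of the two columns.
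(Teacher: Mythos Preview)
Your overall strategy matches the paper's: split into $k\ge n$ and $k<n$ and compute residues directly. However, the $k<n$ case contains a genuine error. You assert that $\Delta_+(z)$ has a simple pole at $X_k$ ``because $\sin(n\pi z-\beta\pi/2)$ vanishes there'', and later that ``$w(nz-\beta/2)B(z)^2$ \dots\ is analytic and nonzero at $X_k$ for the relevant range of $k$''. Both statements are wrong. In the definition (\ref{Delta}), $\sin\theta$ sits in the \emph{numerator} of $\Delta_\pm$, so its vanishing cannot by itself produce a pole; and $B(z)=\prod_{j=0}^{n-1}(z-X_j)$ has a zero at $X_k$ precisely when $k<n$. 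The true mechanism is that $B(z)^2$ contributes a double zero in the denominator while $\sin\theta$ contributes only a simple zero in the numerator, leaving $\Delta_\pm$ with a simple pole. With $\sin\theta(z)\sim n\pi(-1)^k(z-X_k)$, $e^{\pm i\theta(z)}\to(-1)^k$, and $B(z)^2\sim(z-X_k)^2\prod_{j\ne k}(X_k-X_j)^2$ near $X_k$, one obtains
\[
\Res_{z=X_k}\Delta_\pm(z)=\frac{1}{w(nX_k-\beta/2)}\prod_{\substack{j=0\\j\ne k}}^{n-1}(X_k-X_j)^{-2},
\]
which is exactly the scalar $\lim_{z\to X_k}(z-X_k)^2/\bigl(w(nz-\beta/2)B(z)^2\bigr)$ appearing in (\ref{Q2b}); the cancellation in $R_{11}=Q_{11}-\Delta_\pm Q_{12}$ (and likewise $R_{21}$) then follows immediately. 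None of the delicate phase bookkeeping you describe (tracking $\cos(\beta\pi/2)$, etc.) is actually needed, because the two factors of $(-1)^k$ simply cancel.

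Your side remark that $\Delta_\pm\nabla_\pm\equiv1$ is also incorrect (the product equals $e^{\pm2i\theta}$, as noted just before Proposition~\ref{prop-R}) and is in any event not the identity that drives the argument. The $k\ge n$ sketch is essentially right, since there $B$ is indeed nonzero at $X_k$ and $\sin\theta$ genuinely sits in the denominator of $\nabla_\pm$.
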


\begin{proof}
For any $k\in\N$ with $k\ge n$, we have $X_k=\frac{k+\b/2}n>1$ since $1\le\b<2$. From (2.13), it is evident that the residue of $\na_\pm(z)$ at $z=X_k$ is
$$\Res_{z=X_k}\na_\pm(z)=w(nX_k-\b/2)B(X_k)^2.$$
From (\ref{Q2a}), we also note that the residue of $Q_{12}(z)$ at $z=X_k$ is $Q_{11}(X_k)$ multiplied by
$w(nX_k-\b/2)B(X_k)^2$.
Thus, it follows from (2.12b) that the residue of $R_{12}(z)=Q_{12}(z)-\na_\pm(z)Q_{11}(z)$ at $z=X_k$ is zero.
Similarly, one can show that
$\Res\limits_{z=X_k}R_{22}(z)=0.$ Since $R_{11}(z)= Q_{11}(z)$ and $R_{21}(z)= Q_{21}(z)$, and since $Q_{11}(z)$ and $Q_{21}(z)$ are analytic by Proposition 2.2, the residues of $R_{11}(z)$ and $R_{21}(z)$ at $X_{k}$ are zero.
For any $k\in\N$ with $k<n$, we have $X_k=\frac{k+\b/2}n<1$ since $1\le\b<2$. From (\ref{B}), (\ref{Q2b}) and (\ref{Delta}), we observe that
\begin{eqnarray*}
  \Res_{z=X_k}\D_\pm(z)=\cfrac 1{w(nX_k-\b/2)}\prod\limits_{\substack{j=0\\j\ne k}}^{n-1}(X_k-X_j)^{-2},\\
  \Res_{z=X_k}Q_{11}(z)=\cfrac {Q_{12}(X_k)}{w(nX_k-\b/2)}\prod\limits_{\substack{j=0\\j\ne k}}^{n-1}(X_k-X_j)^{-2}.
\end{eqnarray*}
Thus, the residue of $R_{11}(z)=Q_{11}(z)-\D_\pm(z)Q_{12}(z)$ at $z=X_k$ is zero.
Similarly, one can prove that the residue of $R_{21}(z)$ at $z=X_k$ is also zero. Since $R_{12}(z)=Q_{12}(z)$ and $R_{22}(z)$=$Q_{22}(z)$, and since $Q_{12}(z)$ and $Q_{22}(z)$ are analytic by Proposition 2.2, the residues of $R_{12}(z)$ and $R_{22}(z)$ at $X_{k}$ are zero. This completes the proof of the lemma. $\blacksquare$
\end{proof}
From the definition in (\ref{R}) and Lemma \ref{lem-R}, the jump conditions of $R(z)$ given in the follow proposition are easily verified.

\begin{prop}\label{prop-JoR}
Let $\Sigma_R$ be the oriented contour shown in Figure \ref{fig-Q2R}.
Denote by $R_+(z)$ and $R_{-}(z)$, respectively, the limiting values of $R(z)$ on $\Sigma_R$ taken from the left and from the right of the contour. The jump matrix $J_R(z):=R_-(z)^{-1}R_+(z)$ has the following explicit expressions.
For ${\textup Re}\,z=1$ and ${\textup I\textup m}\,z\in(-\delta,\delta)$, we have
\begin{eqnarray}\label{JoR1}
  J_R(z)=\left(\begin{matrix}
    1-\D_\pm(z)\na_\pm(z)&\na_\pm(z)\\
    -\D_\pm(z)&1
  \end{matrix}\right).
\end{eqnarray}
On the positive real line, we have
\begin{subequations}\label{JoR2}
  \begin{equation}
    J_R(x)=\left(\begin{matrix}
    1&0\\
    \D_-(x)-\D_+(x)&1
  \end{matrix}\right)
  \end{equation}
  for $x\in(0,1)$, and
  \begin{equation}
    J_R(x)=\left(\begin{matrix}
    1&\na_-(x)-\na_+(x)\\
    0&1
  \end{matrix}\right)
  \end{equation}
  for $x\in(1,\infty)$.
\end{subequations}
Furthermore, we have
\begin{subequations}\label{JoR3}
  \begin{equation}
    J_R(z)=\left(\begin{matrix}
    1&0\\
    \D_\pm(z)&1
  \end{matrix}\right)
  \end{equation}
  for $z=i\,\im\,z$ with $\im\,z \in(-\delta, \delta)$ and $z=\re\,z \pm i \delta$ with $\re\,z \in (0,1)$, and
  \begin{equation}
    J_R(z)=\left(\begin{matrix}
    1&\na_\pm(z)\\
    0&1
  \end{matrix}\right)
  \end{equation}
  for $z=\re z\pm i\d$ with $\re z\in(1,\infty)$.
\end{subequations}
\end{prop}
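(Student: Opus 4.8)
The plan is to verify the displayed forms of $J_R$ one arc of $\Sigma_R$ at a time, reading the two one-sided limits of $R$ off the piecewise definition (\ref{R}) and invoking Proposition \ref{prop-Q} and Lemma \ref{lem-R}. The structural point that makes this routine is that, away from the nodes, the prefactor $Q(z)$ in (\ref{R}) is analytic \emph{across} every arc of $\Sigma_R$: since $X_k=(k+\b/2)/n$ with $1\le\b<2$, no node equals $0$ or $1$, and every node is real, so the only arcs carrying nodes are the two real segments $(0,1)$ and $(1,\infty)$, on which $Q$ is analytic between consecutive nodes. Hence on each arc, at all but finitely many points $Q_+=Q_-$, and $J_R=R_-^{-1}R_+$ collapses to the product of the two triangular factors attached to $R$ on the two sides of that arc.

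First I would record, from Figure \ref{fig-Q2R}, the orientation of each arc, so as to know which of the three branches of (\ref{R}) is $R_+$ and which is $R_-$. On $\re z=1$, $R$ equals $Q\left(\begin{smallmatrix}1&0\\-\D_\pm&1\end{smallmatrix}\right)$ on the side $\re z\in(0,1)$ and $Q\left(\begin{smallmatrix}1&-\na_\pm\\0&1\end{smallmatrix}\right)$ on the side $\re z\in(1,\infty)$, the former being $R_+$ and the latter $R_-$, so that $J_R=R_-^{-1}R_+=\left(\begin{smallmatrix}1&-\na_\pm\\0&1\end{smallmatrix}\right)^{-1}\left(\begin{smallmatrix}1&0\\-\D_\pm&1\end{smallmatrix}\right)=\left(\begin{smallmatrix}1-\na_\pm\D_\pm&\na_\pm\\-\D_\pm&1\end{smallmatrix}\right)$, which is (\ref{JoR1}). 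Across the real arc $(0,1)$, $R_+$ and $R_-$ carry the factors $\left(\begin{smallmatrix}1&0\\-\D_+&1\end{smallmatrix}\right)$ and $\left(\begin{smallmatrix}1&0\\-\D_-&1\end{smallmatrix}\right)$, giving $J_R=\left(\begin{smallmatrix}1&0\\\D_--\D_+&1\end{smallmatrix}\right)$, and across $(1,\infty)$ the analogous computation gives $\left(\begin{smallmatrix}1&\na_--\na_+\\0&1\end{smallmatrix}\right)$, together yielding (\ref{JoR2}). Finally, on the imaginary-axis segment and on the two pairs of horizontal segments $\im z=\pm\d$, one of $R_\pm$ carries a single triangular factor while the other carries $R=Q$ (the exterior branch of (\ref{R})), which produces the pure triangular jumps (\ref{JoR3}). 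Every step here is a one-line $2\times2$ multiplication.

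The one loose end is the behaviour at the nodes lying on $(0,1)$ and $(1,\infty)$, where $Q$ and $\D_\pm$ (respectively $\na_\pm$) are individually singular; here Lemma \ref{lem-R} is exactly what is needed, since it tells us $R$ has no pole at any $X_k$, so the one-sided limits $R_\pm$ extend continuously through these nodes and the identity $J_R=R_-^{-1}R_+$, already established off the nodes, persists there by continuity (equivalently, the apparent simple poles of $\D_--\D_+$ and of $\na_--\na_+$ at the nodes cancel). I do not foresee a genuine obstacle: the only point requiring care is getting the orientation-to-branch dictionary of the second paragraph right, since a slip there replaces a triangular factor by its inverse and spoils the formulas, but once it is fixed in agreement with Figure \ref{fig-Q2R} everything else is mechanical.
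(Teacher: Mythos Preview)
Your proposal is correct and follows exactly the approach the paper indicates: the paper itself gives no detailed proof, stating only that the jump conditions are ``easily verified'' from the definition (\ref{R}) and Lemma \ref{lem-R}, and you have simply carried out that verification arc by arc. The only thing to double-check against Figure \ref{fig-Q2R} is the orientation-to-branch assignment you flag in your last paragraph, but the computations themselves match (\ref{JoR1})--(\ref{JoR3}).
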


%\begin{proof}
%  Trivial.
%\end{proof}
%\newpage00000
\begin{figure}[htp]
\centering
\includegraphics{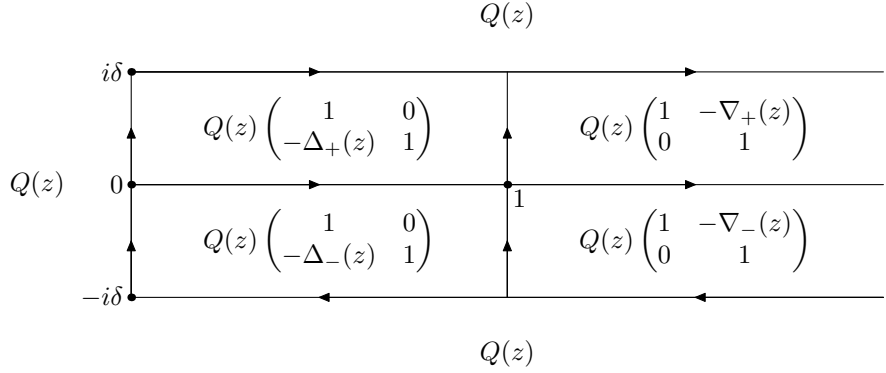}
\caption{The transformation $Q\to R$ and the oriented contour $\Sigma_R$.}\label{fig-Q2R}
\end{figure}

For simplicity, we define
\begin{eqnarray}
  \th(z)&:=&n\pi z-\b\pi/2,\label{theta}\\
  v(z)&:=&-z\log c,\label{v}\\
  C&:=&2i\pi c^{-\b/2}n^\b,\label{C}\\
  W(z)&:=&\frac{(nz)^{1-\b}\G(nz+\b/2)}{\G(nz+1-\b/2)}.\label{W}
  %W(z)&:=&2in\pi w(nz-\b/2)e^{nv(z)}=\frac{2in\pi \G(nz+\b/2)c^{-\b/2}}{\G(nz+1-\b/2)}.\label{W}
\end{eqnarray}
In view of (\ref{weight}) and the above notations, the functions defined in (\ref{nabla}) and (\ref{Delta}) become
\begin{eqnarray}
  \na_\pm=\frac{Cz^{\b-1}WB^2}{2i\sin\th e^{\mp i\th+nv}}\AND
  \D_\pm=\frac{2i\sin\th e^{\pm i\th+nv}}{Cz^{\b-1}WB^2}.\label{naD}
\end{eqnarray}
It is easy to see that $$\D_\pm\na_\pm=e^{\pm 2i\th}$$ for $z\in\C_\pm$. Also,
$$\na_--\na_+=-Cx^{\b-1}e^{-nv}WB^2$$
for $z=x\in(1,\infty)$, and
$$\D_--\D_+=\frac{4\sin^2\th}{Cx^{\b-1}e^{-nv}WB^2}$$
for $z=x\in(0,1)$.
\begin{prop}\label{prop-R}
The matrix-valued function $R(z)$ defined in (\ref{R}) is the unique solution of the
following Riemann-Hilbert problem:
\begin{enumerate}[(R1)]
\item $R(z)$ is analytic in
  $\mathbb{C}\setminus \Sigma_R$;
\item for $z\in\Sigma_R$,
  $
    R_+(z)=R_-(z)J_R(z),
  $ where the jump matrix $J_R(z)$ is given in Proposition \ref{prop-JoR};
\item for $z\in\mathbb{C}\setminus \Sigma_R$, $R(z)=I+O(|z|^{-1})$ as $z\to\infty$.
\end{enumerate}
\end{prop}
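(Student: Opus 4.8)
The plan is to verify the defining properties (R1)--(R3) directly from the construction of $R(z)$ in (\ref{R}), invoking Proposition \ref{prop-Q}, Lemma \ref{lem-R} and Proposition \ref{prop-JoR}, and then to settle uniqueness by the standard Liouville argument. For (R1), observe that $\C\setminus\Sigma_R$ is the union of the four thin regions in (\ref{R}) together with the exterior region, and that none of these open sets contains a node $X_k$, since the nodes are real and therefore lie on the part of $\Sigma_R$ along $(0,\infty)$. On the exterior region $R(z)$ coincides with $Q(z)$, which is analytic by Proposition \ref{prop-Q}. On each thin region $R(z)$ equals $Q(z)$ multiplied by a triangular factor built from $\D_\pm$ or $\na_\pm$; here $Q(z)$ is again analytic, and so is the triangular factor, because in $\{0<\re z<1\}$ the only possible singularities of $\D_\pm$ are the zeros of $B(z)^2$, and in $\{\re z>1\}$ the only possible singularities of $\na_\pm$ are the zeros of $\sin\th(z)$ (the relevant quotient of Gamma functions in $w(nz-\b/2)$ being analytic and nonvanishing in the respective strip) --- and in either case these singular points lie on the real axis, outside the thin region. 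Hence $R(z)$ is analytic in $\C\setminus\Sigma_R$, which is (R1).

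Property (R2), with the explicit jump matrix, is precisely Proposition \ref{prop-JoR}: across each arc of $\Sigma_R$ the factor $Q(z)$ has no jump --- the nodes lying on $\Sigma_R$ are isolated, and by Lemma \ref{lem-R} the singularity of $R(z)$ at each of them is removable --- so $J_R=R_-^{-1}R_+$ reduces to the product of the two elementary triangular factors read off from (\ref{R}), evaluated with the correct orientation of the arc and the matching $\pm$-labels; the identity $\D_\pm\na_\pm=e^{\pm2i\th}$ recorded after (\ref{naD}) is what produces the $(1,1)$-entry of $J_R$ on the line $\re z=1$.

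For (R3) one must be slightly careful, because the two regions $\{\re z>1,\ 0<\pm\im z<\d\}$ are unbounded, so that $z\to\infty$ along the positive-real direction is approached with $R(z)=Q(z)\left(\begin{smallmatrix}1 & -\na_\pm\\ 0 & 1\end{smallmatrix}\right)$ rather than with $R=Q$. However, by (\ref{naD}), together with $B(z)\sim z^n$ and $W(z)\to1$ as $z\to\infty$, the entry $\na_\pm(z)$ is bounded there by a fixed power of $|z|$ times $|e^{-n v(z)}|$ (for $z$ bounded away from $\Sigma_R$, so that $|\sin\th(z)|$ stays bounded below); and since $v(z)=-z\log c$ with $0<c<1$, the modulus $|e^{-nv(z)}|=e^{n(\log c)\re z}$ decays exponentially as $\re z\to+\infty$. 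Thus $\na_\pm(z)=O(|z|^{-1})$ in those regions, and combining this with $Q(z)=I+O(|z|^{-1})$ from Proposition \ref{prop-Q} gives $R(z)=I+O(|z|^{-1})$ as $z\to\infty$, which is (R3).

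Uniqueness is by the standard argument. Every jump matrix in Proposition \ref{prop-JoR} has determinant $1$, so $\det R(z)$ continues to an entire function with limit $1$ at infinity; hence $\det R\equiv1$ and $R(z)^{-1}$ exists for all $z$. If $\wt R$ is another solution, then $\wt R R^{-1}$ has no jump across $\Sigma_R$, is analytic off $\Sigma_R$, remains bounded near $\Sigma_R$ and near its finitely many self-intersection points (at which $Q$, and therefore $R$, is analytic since no node sits there), and tends to $I$ at infinity; Liouville's theorem then forces $\wt R R^{-1}\equiv I$. I expect the main obstacle in this scheme to be (R3): the domains of definition of $R$ reach infinity through the unbounded strips, so that one has to exhibit the exponential decay of $\na_\pm$ there rather than simply reading off $R=Q$. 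By contrast, once the orientation and $\pm$-conventions along $\Sigma_R$ are fixed, the computation of the various jump matrices needed in (R2) is entirely routine.
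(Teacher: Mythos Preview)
Your proof is correct and follows the same route as the paper, which simply states that the result ``follows readily from Proposition~\ref{prop-Q} and the definition of $R(z)$ in (\ref{R}).'' You have supplied the details the paper omits---in particular the exponential decay of $\na_\pm$ in the unbounded strips needed for (R3), and the standard Liouville uniqueness argument---so your argument is a faithful expansion of the paper's one-line proof rather than an alternative approach.
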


{\it Proof.}  It follows readily from Proposition \ref{prop-Q} and the definition of $R(z)$ in (\ref{R}).\qquad \qquad $\blacksquare$

%\newpage00000
\section{Some auxiliary functions}
To construct our parametrix, we should introduce some auxiliary functions.
First, define the two constants
\begin{eqnarray}\label{ab}
  a:=\frac{1-\sqrt{c}}{1+\sqrt{c}}\AND
  b:=\frac{1+\sqrt{c}}{1-\sqrt{c}}.
\end{eqnarray}
These constants are the two turning points for the Meixner polynomials; see \cite[(2.6)]{JW98}.
\begin{subequations}\label{phi}
Let
\begin{eqnarray}
\phi(z):=z\log\frac{\sqrt{bz-1}+\sqrt{az-1}}{\sqrt{bz-1}-\sqrt{az-1}}-\log\frac{\sqrt{z-a}+\sqrt{z-b}}{\sqrt{z-a}-\sqrt{z-b}}
\end{eqnarray}
for $z\in\C\cut(-\infty,b]$ and
\begin{eqnarray}
\wt\phi(z):=z\log\frac{\sqrt{1-az}+\sqrt{1-bz}}{\sqrt{1-az}-\sqrt{1-bz}}-\log\frac{\sqrt{b-z}+\sqrt{a-z}}{\sqrt{b-z}-\sqrt{a-z}}
\end{eqnarray}
for $z\in\C\cut(-\infty,0]\cup[a,\infty)$.
\end{subequations}
These two functions are analogues of the $\phi$-function and $\wt\phi$-function in \cite{WZ06}.
It is clear from the defintions that
\begin{eqnarray}\label{phi-phi'}
\wt\phi(z)=\phi(z)\pm i\pi(1-z)
\end{eqnarray}
for $z\in\C_\pm$.
As $z\to\infty$, we have
\begin{eqnarray*}
\phi(z)= z\log\frac{\sqrt b+\sqrt a}{\sqrt b-\sqrt a}-\log z+\log\frac{b-a}4-1+O(\frac{1}{z}).
\end{eqnarray*}
Here we have used the fact that $ab=1$. Put
\begin{eqnarray}
l:=2\log\frac{b-a}4-2,\label{l}
\end{eqnarray}
and recall the definition of $v(z)$ in (\ref{v}). Since $(\sqrt{b}+\sqrt{a})/(\sqrt{b}-\sqrt{a})=1/\sqrt{c}$ by (\ref{ab}),
it follows from the above two equations that
\begin{eqnarray}\label{phi-asymp}
-\phi(z)+v(z)/2+l/2=\log z+O(\frac{1}{z})
\end{eqnarray}
as $z\to\infty$.
For convenience, we define
\begin{eqnarray}
F(z):=\left[\frac32n\phi(z)\right]^{2/3}\AND \wt F(z):=\left[-\frac32n\wt\phi(z)\right]^{2/3}.\label{F}
\end{eqnarray}
Note by (\ref{phi}) that $\phi(b)=0$ and
\begin{eqnarray}\label{nu}
\phi'(z)=\log\frac{\sqrt{bz-1}+\sqrt{az-1}}{\sqrt{bz-1}-\sqrt{az-1}}= \log\frac{\sqrt{1-bz}+\sqrt{1-az}}{\sqrt{1-bz}-\sqrt{1-az}} \quad.
\end{eqnarray}
Using (3.3), it is readily seen that $\tilde{\phi}(a)=0$ and $\tilde{\phi}(0)=\frac{1}{2}\log c.$  The mapping properties of the functions $\phi(z)$ and $\wt\phi(z)$ are illustrated in Figure \ref{fig-phi}.
From this figure and the definitions (\ref{phi}) and (\ref{F}), we have the following proposition.

%\newpage00000
\begin{prop}\label{prop-phi}
For $z\in\C\cut[a,b]$, we have
\begin{eqnarray}\label{phi1}
\pm\arg F(z)\in(-\pi,\pi) \AND \arg\wt F(z)\in(-\pi,\pi).
\end{eqnarray}
For $\re z\in(a,b)$ and $\pm\im z\in[0,\d]$, we have
\begin{eqnarray}\label{phi2}
\pm\arg F(z)\in(\pi/3,\pi] \AND \qquad \mp\arg\wt F(z)\in(\pi/3,\pi].
\end{eqnarray}
For $x\ge0$ \it{and $\delta$ sufficient small, we have }
\begin{eqnarray}\label{phi3}
\re\phi(x\pm i\d)\sim\begin{cases}
\phi(x),&x\ge b;\\
-2\d\arctan\sqrt{\cfrac{1-ax}{bx-1}},&a<x<b;\\
\wt\phi(x)-\pi\d,&0\le x\le a.
\end{cases}
\end{eqnarray}
\end{prop}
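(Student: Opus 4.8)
The content of Proposition \ref{prop-phi} is a rigorous reading of the conformal behaviour of $\phi$ and $\wt\phi$ depicted in Figure \ref{fig-phi}, and the plan is to extract all three assertions from the explicit derivative formula (\ref{nu}) together with the elementary data $\phi(b)=0$, $\wt\phi(a)=0$, $\wt\phi(0)=\tfrac12\log c$ and the connection formula (\ref{phi-phi'}). The structural fact that drives everything is that both turning points are regular: from (\ref{nu}) one reads off $\phi'(b)=\log1=0$, so near $z=b$
\[
\phi(z)=\frac{4\sqrt a}{3\sqrt{b^{2}-1}}\,(z-b)^{3/2}\bigl(1+o(1)\bigr),
\]
and, differentiating (\ref{phi-phi'}) and evaluating (\ref{nu}) at $a$, $\wt\phi'(a)=0$ with $\wt\phi$ vanishing at $a$ like a purely imaginary constant times $(z-a)^{3/2}$. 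Consequently $F(z)=[\tfrac32n\phi(z)]^{2/3}$, normalised to be real and positive on $(b,\infty)$ (where $\phi>0$ since $\phi(b)=0$ and, by (\ref{nu}), $\phi'>0$ there) and then continued analytically, behaves like $c_{1}(z-b)$ with $c_{1}>0$ near $b$; likewise $\wt F(z)$ behaves like $c_{2}(a-z)$ with $c_{2}>0$ near $a$. This removes the branch points at the turning points and is what makes the statements with $\arg F,\arg\wt F\in(-\pi,\pi)$ meaningful.

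I would prove (\ref{phi3}) first, as it is purely computational and feeds into (\ref{phi2}). From $\phi(x\pm i\d)=\phi(x\pm i0)\pm i\d\,\phi'(x\pm i0)+O(\d^{2})$ one gets $\re\phi(x\pm i\d)=\re\phi(x\pm i0)\mp\d\,\im\phi'(x\pm i0)+O(\d^{2})$. For $x\ge b$, both $\phi$ and $\phi'$ are real, so the right-hand side is $\phi(x)+O(\d^{2})$. For $a<x<b$, formula (\ref{nu}) gives $\phi'(x\pm i0)=\pm2i\arctan\sqrt{(1-ax)/(bx-1)}$, which is purely imaginary, while $\re\phi(x\pm i0)=0$ (integrate $\phi'$ from $b$, where $\phi=0$); this yields $\re\phi(x\pm i\d)\sim-2\d\arctan\sqrt{(1-ax)/(bx-1)}$. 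For $0\le x\le a$, $\wt\phi$ is real there (immediate from (\ref{phi})), so $\re\phi(x\pm i0)=\wt\phi(x)$ by (\ref{phi-phi'}), and $\phi'(x\pm i0)=\wt\phi'(x)\pm i\pi$ with $\wt\phi'(x)$ real, whence the $\d$-correction is $-\pi\d$ and $\re\phi(x\pm i\d)\sim\wt\phi(x)-\pi\d$. (One checks that the three expressions match at $x=a$ and at $x=b$.)

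For (\ref{phi1}): since $F(z)\sim c_{1}(z-b)$ near $b$ with $c_{1}>0$, there $\arg F(z)=\arg(z-b)+o(1)\in(-\pi,\pi)$, the slit $[a,b]$ meeting $b$ along the direction $\arg(z-b)=\pi$; this bound propagates to all of $\C\cut[a,b]$ once $\phi$, hence $F$, is shown to be injective there, which in turn follows from the fact (to be read off (\ref{nu})) that $\im\phi$ keeps a fixed sign on each component of $\C_{\pm}\cut[a,b]$, so no further winding of $\arg\phi$ occurs. The bound for $\wt F$ is obtained identically, with $b$ replaced by $a$ and using $\wt\phi(0)=\tfrac12\log c<0$ and (\ref{phi-phi'}) to control $\wt\phi$ across the real axis. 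For (\ref{phi2}): the integration above shows $\phi(x+i0)$ lies on the negative imaginary axis for $a<x<b$, so, using $F(z)\sim c_{1}(z-b)$ near $b$ and continuity, $\arg F(x+i\d)$ stays near $\arg\bigl((x-b)+i\d\bigr)\in(\pi/2,\pi)$ for $x$ close to $b$ and increases to $\pi$ as $x\to a^{+}$, remaining in $(\pi/3,\pi]$; it cannot drop to $\pi/3$, for that would force $\phi$ onto the positive imaginary axis, contradicting $\re\phi(x+i\d)<0$ from (\ref{phi3}). The lower edge is the mirror image. The statements for $\wt F$ follow the same way, once one records---via (\ref{phi-phi'}) and the identity $\int_{a}^{b}\arctan\sqrt{(1-at)/(bt-1)}\,dt=\tfrac{\pi}{2}(1-a)$---that $\wt\phi(x\pm i0)$ is purely imaginary on $(a,b)$ as well, and that $\re\wt\phi(x+i\d)=\re\phi(x+i\d)+\pi\d>0$ by (\ref{phi3}).

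The main obstacle is not a single hard inequality but the global bookkeeping: fixing one consistent branch of each fractional power and of $\arg$, tracking it across the various branch cuts of $\phi$, $\wt\phi$, $F$ and $\wt F$, and---the genuine crux---proving the injectivity/conformality claims, equivalently that $\im\phi$ and $\im\wt\phi$ do not change sign on the relevant components of $\C_{\pm}\cut[a,b]$. That monotonicity is precisely where the explicit derivative (\ref{nu}) must do the work, through sign and convexity arguments along rays; everything else reduces to the special values $\phi(b)=0$, $\wt\phi(a)=0$, $\wt\phi(0)=\tfrac12\log c$ and the connection formula (\ref{phi-phi'}), as summarised in Figure \ref{fig-phi}.
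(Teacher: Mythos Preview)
Your proof of (\ref{phi3}) is essentially identical to the paper's: both use the two-term Taylor expansion $\re\phi(x\pm i\d)\sim\re\phi_\pm(x)\mp\d\,\im\phi'_\pm(x)$, then compute $\im\phi'_\pm(x)$ in the three ranges directly from (\ref{nu}) and $\re\phi_\pm(x)$ from (\ref{phi}) and (\ref{phi-phi'}), obtaining exactly the case-split you wrote down.

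For (\ref{phi1}) and (\ref{phi2}) the paper is much briefer than you: it simply says these are ``easy to prove by using (\ref{phi}), (\ref{F}) and Figure~\ref{fig-phi}'', i.e.\ it treats the mapping properties depicted in the figure as self-evident and does not carry out the injectivity or sign-of-$\im\phi$ analysis you sketch. Your approach---local expansion at the turning points to pin down the branches of $F$ and $\wt F$, then propagation via monotonicity of $\im\phi$ on components of $\C_\pm\setminus[a,b]$---is the natural way to make the figure rigorous, and the ``genuine crux'' you identify (that $\im\phi$ does not change sign there) is indeed the content the paper is taking for granted. So your argument and the paper's agree in substance; you are supplying the analytic justification that the paper leaves implicit in the figure.
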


{\it Proof.} It is easy to prove (\ref{phi1}) and (\ref{phi2}) by using (\ref{phi}), (\ref{F}) and Figure \ref{fig-phi}.
For small $\d>0$, we have from a two-term Taylor expansion
$$\re\phi(x\pm i\d)\sim\re\phi_\pm(x)\mp\d\im\phi'_\pm(x).$$
Note by (\ref{nu}) that
\begin{eqnarray*}
\im\phi'_\pm(x)=\begin{cases}
0,&x\ge b;\\
\pm2\arctan\sqrt{\cfrac{1-ax}{bx-1}},&a<x<b;\\
\pm\pi,&0\le x\le a.
\end{cases}
\end{eqnarray*}
Moveover, (\ref{phi}) and (\ref{phi-phi'}) imply
\begin{eqnarray*}
\re\phi_\pm(x)=\begin{cases}
\phi(x),&x\ge b;\\
0,&a<x<b;\\
\wt\phi(x),&0\le x\le a.
\end{cases}
\end{eqnarray*}
Thus, (\ref{phi3}) follows from the above three equations. \qquad\qquad $\blacksquare$\\

\begin{figure}[htp]
\centering
\includegraphics{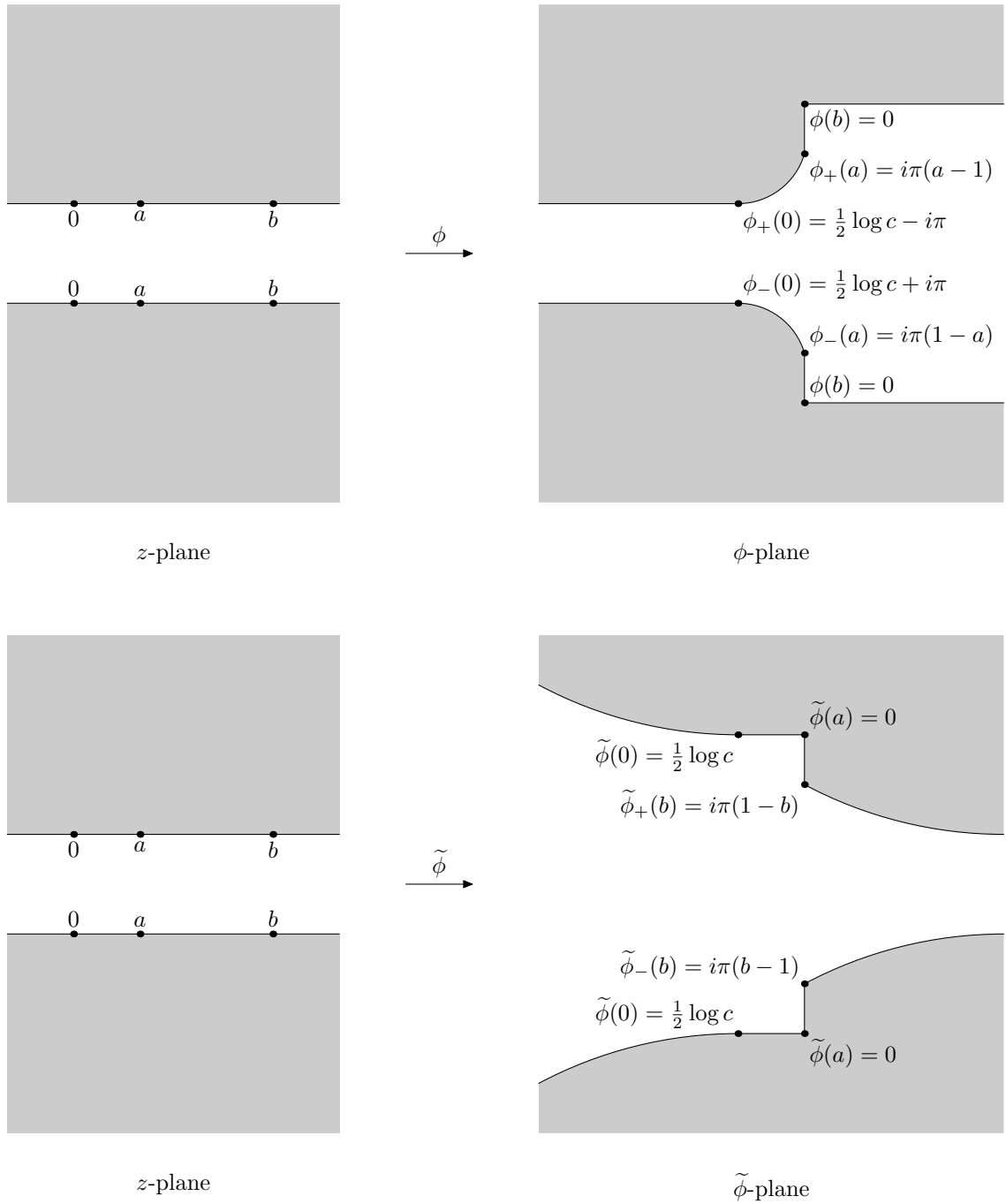}
\caption{The $z$-plane under the mappings $\phi(z)$ and $\wt\phi(z)$.}\label{fig-phi}
\end{figure}

%\newpage00000

Define
\begin{eqnarray}\label{N}
N(z):=\left(\begin{matrix}
    \cfrac{(z-1)^{\frac{1-\b}2}(\frac{\sqrt{z-a}+\sqrt{z-b}}2)^\b}{(z-a)^{1/4}(z-b)^{1/4}}&
    \cfrac{-i(z-1)^{\frac{\b-1}2} (\frac{\sqrt{z-a}-\sqrt{z-b}}2)^\b}{(z-a)^{1/4}(z-b)^{1/4}} \\
    \\
    \cfrac{i(z-1)^{\frac{1-\b}2}(\frac{\sqrt{z-a}-\sqrt{z-b}}2)^{2-\b}}{(z-a)^{1/4}(z-b)^{1/4}}&
    \cfrac{(z-1)^{\frac{\b-1}2} (\frac{\sqrt{z-a}+\sqrt{z-b}}2)^{2-\b}}{(z-a)^{1/4}(z-b)^{1/4}}
  \end{matrix}\right).
\end{eqnarray}
It is easy to verify that $N(z)$ is analytic in $\C\cut[a,b]$ and
\begin{eqnarray}\label{JoN}
N_+(x)=N_-(x)\left(\begin{matrix}
    0&-|x-1|^{\b-1} \\
    |x-1|^{1-\b}&0
  \end{matrix}\right),\qquad x\in(a,b).
\end{eqnarray}
The matrix $N(z)$ is analogous to the matrix $N(z)$ in \cite{DKMVZ99,WW05}.
Now we introduce the Airy parametrix which is also similar to the one in \cite{DKMVZ99,WW05}.
For $z\in\C_\pm$, define
\begin{eqnarray}\label{A}
  \A(z):= \left(\begin{matrix}
    \Ai(z)&-i\Bi(z) \\
    i\Ai'(z)&\Bi'(z)
  \end{matrix}\right) \left(\begin{matrix}
    1&\mp1/2 \\
    0&1/2
  \end{matrix}\right).
\end{eqnarray}
It is clear that
\begin{eqnarray}\label{JoA}
  \A_+(x)=\A_-(x)\left(\begin{matrix}
    1&-1 \\
    0&1
  \end{matrix}\right)
\end{eqnarray}
on the real line.
For convenience, set $\om=e^{2\pi i/3}$. Note that (cf. \cite[(10.4.9)]{AS70})
$$2\om\Ai(\om z)=-\Ai(z)+i\Bi(z)\AND 2\om^2\Ai(\om^2z)=-\Ai(z)-i\Bi(z).$$
We obtain from (\ref{A})
\begin{eqnarray}\label{A1}
\A(z)=\begin{cases}
  \left(\begin{matrix}
    \Ai(z)&\om^2\Ai(\om^2z) \\
    i\Ai'(z)&i\om\Ai'(\om^2z)
  \end{matrix}\right)&z\in\C_+;\\
  \\
  \left(\begin{matrix}
    \Ai(z)&-\om\Ai(\om z) \\
    i\Ai'(z)&-i\om^2\Ai'(\om z)
  \end{matrix}\right)&z\in\C_-.
\end{cases}
\end{eqnarray}
Furthermore, in view of (cf. \cite[(10.4.7)]{AS70})
$$\Ai(z)+\om\Ai(\om z)+\om^2\Ai(\om^2z)=0,$$
we have
\begin{eqnarray}\label{A2}
\A(z)\left(\begin{matrix}
    1&0 \\
    \pm1&1
  \end{matrix}\right)
=\begin{cases}
  \left(\begin{matrix}
    -\om\Ai(\om z)&\om^2\Ai(\om^2z) \\
    -i\om^2\Ai'(\om z)&i\om\Ai'(\om^2z)
  \end{matrix}\right)&z\in\C_+;\\
  \\
  \left(\begin{matrix}
    -\om^2\Ai(\om^2z)&-\om\Ai(\om z) \\
    -i\om\Ai'(\om^2z)&-i\om^2\Ai'(\om z)
  \end{matrix}\right)&z\in\C_-.
\end{cases}
\end{eqnarray}
Recall the asymptotic expansions of the Airy function and its
derivative (cf. \cite[p. 392]{Ol97} or \cite[p. 47]{Wo89})
\begin{eqnarray}\label{Airy}
 \Ai(z)\sim \frac{z^{-1/4}}{2\sqrt\pi}e^{-\frac 2 3z^{3/2}}\sum_{s=0}^\infty
 \frac{(-1)^su_s}{(\frac 2 3z^{3/2})^s},\indent
 \Ai'(z)\sim -\frac{z^{1/4}}{2\sqrt\pi}e^{-\frac 2 3z^{3/2}}\sum_{s=0}^\infty
 \frac{(-1)^sv_s}{(\frac 2 3z^{3/2})^s}
\end{eqnarray}
as $z\to\infty$ with $|\arg z|<\pi$, where $u_s, v_s$ are constants with
$u_0=v_0=1$. For $\arg z\in(-\pi,0]$, we have $\arg(\om z)\in(-\pi/3,2\pi/3]$.
Thus, by using (\ref{Airy}) we obtain as $z\to\infty$ with $\arg z\in(-\pi,0]$,
\begin{eqnarray*}
&&-\om\Ai(\om z)\sim\frac{-\om(\om z)^{-1/4}}{2\sqrt\pi}e^{-\frac23(\om z)^{3/2}}
\sim\frac{-iz^{-1/4}}{2\sqrt\pi}e^{\frac23 z^{3/2}},\\
&&-i\om^2\Ai'(\om z)\sim\frac{i\om^2(\om z)^{1/4}}{2\sqrt\pi}e^{-\frac23(\om z)^{3/2}}
\sim\frac{z^{1/4}}{2\sqrt\pi}e^{\frac23 z^{3/2}}.
\end{eqnarray*}
For $\arg z\in[0,\pi)$, we have $\arg(\om^2z)\in[4\pi/3,7\pi/3)$.
Here, we cannot use (\ref{Airy}) with $z$ replaced by $\om^2z$.
However, since $\om^2z=\om^{-1}z$ and $\arg(\om^{-1}z)\in[-2\pi/3,\pi/3)$,
we can use (\ref{Airy}) with $z$ replaced by $\om^{-1}z$ and obtain, as $z\to\infty$ with $\arg z\in[0,\pi)$,
\begin{eqnarray*}
&&\om^2\Ai(\om^2z)=\om^2\Ai(\om^{-1}z)\sim\frac{\om^2(\om^{-1} z)^{-1/4}}{2\sqrt\pi}e^{-\frac23(\om^{-1} z)^{3/2}}
\sim\frac{-iz^{-1/4}}{2\sqrt\pi}e^{\frac23 z^{3/2}},\\
&&i\om\Ai'(\om^2 z)=i\om\Ai'(\om^{-1} z)\sim\frac{-i\om(\om^{-1} z)^{1/4}}{2\sqrt\pi}e^{-\frac23(\om^{-1} z)^{3/2}}
\sim\frac{z^{1/4}}{2\sqrt\pi}e^{\frac23 z^{3/2}}.
\end{eqnarray*}
Applying (\ref{Airy}) and the above four formulas to (\ref{A1}) gives
\begin{eqnarray}\label{A-asymp1}
   \A(z)=\frac {z^{-\s_3/4}}{2\sqrt\pi} \left(\begin{matrix}
    1&-i \\
    -i&1
  \end{matrix}\right)(I+O(|z|^{-3/2}))e^{-\frac 2 3z^{3/2}\s_3}
\end{eqnarray}
as $z\to\infty$ with $\arg z\in(-\pi,\pi)$.
For $\arg z\in(\pi/3,\pi]$, we have $\arg(w^{-2}z)\in(-\pi,-\pi/3]$ and $\arg(w^{-1}z)\in(-\pi/3,\pi/3]$.
Thus, by using (\ref{Airy}) we obtain as $z\to\infty$ with $\arg z\in(\pi/3,\pi]$
\begin{eqnarray*}
&&-\om\Ai(\om z)=-\om\Ai(\om^{-2}z)\sim\frac{-\om(\om^{-2} z)^{-1/4}}{2\sqrt\pi}e^{-\frac23(\om^{-2} z)^{3/2}}
\sim\frac{z^{-1/4}}{2\sqrt\pi}e^{-\frac23 z^{3/2}},\\
&&-i\om^2\Ai'(\om z)=-i\om^2\Ai'(\om^{-2} z)\sim\frac{i\om^2(\om^{-2} z)^{1/4}}{2\sqrt\pi}e^{-\frac23(\om^{-2} z)^{3/2}}
\sim\frac{-iz^{1/4}}{2\sqrt\pi}e^{-\frac23 z^{3/2}},
\end{eqnarray*}
and
\begin{eqnarray*}
&&\om^2\Ai(\om^2z)=\om^2\Ai(\om^{-1}z)\sim\frac{\om^2(\om^{-1} z)^{-1/4}}{2\sqrt\pi}e^{-\frac23(\om^{-1} z)^{3/2}}
\sim\frac{-iz^{-1/4}}{2\sqrt\pi}e^{\frac23 z^{3/2}},\\
&&i\om\Ai'(\om^2 z)=i\om\Ai'(\om^{-1} z)\sim\frac{-i\om(\om^{-1} z)^{1/4}}{2\sqrt\pi}e^{-\frac23(\om^{-1} z)^{3/2}}
\sim\frac{z^{1/4}}{2\sqrt\pi}e^{\frac23 z^{3/2}}.
\end{eqnarray*}
For $\arg z\in[-\pi,-\pi/3)$, we have $\arg(\om^2 z)\in[\pi/3,\pi)$ and $\arg(\om z)\in[-\pi/3,\pi/3)$.
Thus, as $z\to\infty$ with $\arg z\in[-\pi,-\pi/3)$, we obtain from (\ref{Airy}) that
\begin{eqnarray*}
&&-\om^2\Ai(\om^2z)\sim\frac{-\om^2(\om^2 z)^{-1/4}}{2\sqrt\pi}e^{-\frac23(\om^2 z)^{3/2}}
\sim\frac{z^{-1/4}}{2\sqrt\pi}e^{-\frac23 z^{3/2}},\\
&&-i\om\Ai'(\om^2 z)\sim\frac{i\om(\om^2 z)^{1/4}}{2\sqrt\pi}e^{-\frac23(\om^2 z)^{3/2}}
\sim\frac{-iz^{1/4}}{2\sqrt\pi}e^{-\frac23 z^{3/2}},
\end{eqnarray*}
and
\begin{eqnarray*}
&&-\om\Ai(\om z)\sim\frac{-\om(\om z)^{-1/4}}{2\sqrt\pi}e^{-\frac23(\om z)^{3/2}}
\sim\frac{-iz^{-1/4}}{2\sqrt\pi}e^{\frac23 z^{3/2}},\\
&&-i\om^2\Ai'(\om z)\sim\frac{i\om^2(\om z)^{1/4}}{2\sqrt\pi}e^{-\frac23(\om z)^{3/2}}
\sim\frac{z^{1/4}}{2\sqrt\pi}e^{\frac23 z^{3/2}}.\\
\end{eqnarray*}
Applying the last eight formulas to (\ref{A2}) gives
\begin{eqnarray}\label{A-asymp2}
   \A(z)\left(\begin{matrix}
    1&0 \\
    \pm1&1
  \end{matrix}\right)=\frac {z^{-\s_3/4}}{2\sqrt\pi} \left(\begin{matrix}
    1&-i \\
    -i&1
  \end{matrix}\right)(I+O(|z|^{-3/2}))e^{-\frac 2 3z^{3/2}\s_3}
\end{eqnarray}
as $z\to\infty$ with $|\arg z|\in(\pi/3,\pi]$.
Here the sign $\pm$ means plus sign when $z$ is in the upper half plane,
and minus sign when $z$ is in the lower half plane.
Finally, we introduce a crucial function which
enables us to obtain global asymptotic formulas without any cut in the complex plane; see a statement in the second last paragraph of Section 1.
For $z$ not on the imaginary line, we define
\begin{eqnarray}\label{D}
D(z):=\begin{cases}
  \cfrac{e^{nz}\G(nz-\b/2+1)}{\sqrt{2\pi}(nz)^{nz+(1-\b)/2}}&\re z>0;\\
  \\
  \cfrac{\sqrt{2\pi}(-nz)^{-nz+(\b-1)/2}}{e^{-nz}\G(-nz+\b/2)}&\re z<0.
\end{cases}
\end{eqnarray}
The jump of $D(z)$ on the imaginary line is given by
\begin{eqnarray}\label{JoD}
J_D(z):=D_-(z)^{-1}D_+(z)=\mp2i\sin\pi(nz-\b/2)e^{\pm i\pi(nz-\b/2)}=1-e^{\pm2i\pi(nz-\b/2)},
\end{eqnarray}
where $D_+(z)$ ($D_-(z)$) is the limiting value of $D(z)$ taken from the left (right) of the imaginary axis.
It will be seen in the proof of Lemma \ref{lem-K} that
the usage of $D(z)$ is to cancel the jump $1-e^{\pm2i\pi(nz-\b/2)}$ across the imaginary axis.
The explicit formula of $D(z)$ is obtained by
solving a one-dimensional Riemann-Hilbert problem and calculating a Cauchy integral.
As $n\to\infty$, applying Stirling's formula (cf. \cite[(6.1.40)]{AS70}) to (\ref{D}), we have
\begin{eqnarray}\label{D-asymp}
D(z)=1+O(1/n)
\end{eqnarray}
uniformly for $z$ bounded away from the origin.

\section{Construction of parametrix}
\begin{subequations}\label{R'}
For $\re z\notin[0,1]$ or $\im z\notin[-\d,\d]$, we define
\begin{eqnarray}\label{R'a}
  \wt R(z):=\sqrt\pi[Ce^{nl}]^{\s_3/2}N(z)(z-1)^{\frac{\b-1}2\s_3}\left(\begin{matrix}
    1&i \\
    i&1
  \end{matrix}\right)F(z)^{\s_3/4}\A(F)
  %\nonumber\\  &&\times
  \left[\frac{D(z)^2e^{nv(z)}}{Cz^{\b-1}B(z)^2}\right]^{\s_3/2}.
\end{eqnarray}
For $\re z\in(0,1)$ and $\im z\in(-\delta,\delta)$, we define
\begin{eqnarray}\label{R'b}
  \wt R(z)&:=&(-1)^{n+1}\sqrt\pi[Ce^{nl}]^{\s_3/2}N(z)(1-z)^{\frac{\b-1}2\s_3}\left(\begin{matrix}
    1&i \\
    i&1
  \end{matrix}\right)\wt F(z)^{-\s_3/4}\s_1\A(\wt F)\s_1\nonumber\\
  &&\times\left[\frac{4\sin^2\th(z)D(z)^2e^{nv(z)}}{Cz^{\b-1}B(z)^2}\right]^{\s_3/2}.
\end{eqnarray}
\end{subequations}
Note that $\tilde{R}(z)$ has jumps across the negative real axis and the imaginary axis; they are caused by the functions $z^{\beta-1}$ and $D(z)$ respectively. Our parametrix $\tilde{R}(z)$ is analogous to that in \cite[(4.54)]{DW07}.
The main difference is that the parametrix here has a factor involving the auxiliary function $D(z)$ defined in (\ref{D}).
This factor will make our asymptotic formulas valid in much bigger regions, one of which includes the cut $(-\infty,0]$.
As $z\to\infty$, a combination of (\ref{phi-asymp}), (\ref{F}) and (\ref{A-asymp1}) gives
$$\sqrt\pi\left(\begin{matrix}
    1&i \\
    i&1
  \end{matrix}\right)F(z)^{\s_3/4}\A(F)= e^{-n\phi\s_3}[I+O(\frac{1}{z})]= e^{(-\frac{nv}2-\frac{nl}2)\s_3}z^{n\s_3}[I+O(\frac{1}{z})].$$
Furthermore, it is easily seen from the definitions (\ref{B}), (\ref{N}) and (\ref{D}) that we have respectively
$B(z)\sim z^n$, $N(z)\sim I$ and $D(z)\sim1$ as $z\to\infty$.
Thus we obtain from (\ref{R'a}) and the above formula that
\begin{eqnarray}\label{R'-asymp}
\wt R(z)=[Ce^{nl}]^{\s_3/2}z^{\frac{\b-1}2\s_3}[I+O(\frac{1}{z})]e^{(-\frac{nv}2-\frac{nl}2)\s_3}z^{n\s_3}
\left[\frac{e^{nv}}{Cz^{\b-1}z^{2n}}\right]^{\s_3/2}=I+O(\frac{1}{z})
\end{eqnarray}
as $z\to\infty$.
Define
\begin{eqnarray}\label{K}
K(z):=(Ce^{nl})^{-\s_3/2}R(z)\wt R(z)^{-1}(Ce^{nl})^{\s_3/2}.
\end{eqnarray}
It is clear from (\ref{R'-asymp}) and Proposition \ref{prop-R} that
\begin{eqnarray}\label{K-asymp}
K(z)=I+O(\frac{1}{z})
\end{eqnarray}
as $z\to\infty$.
Let $\Sigma_{K}$ denote the oriented contour consisting of $\Sigma_{R}$ in Figure {\textup 1}, the negative real axis, and the two infinite lines from $z=\pm i\delta$ to $z=\pm i \infty$ on the imaginary axis. The jump matrix of $K(z)$ is given by
\begin{eqnarray}\label{JoK}
J_K(z):=K_-(z)^{-1}K_+(z)=(Ce^{nl})^{-\s_3/2}\wt R_-(z)J_R(z)\wt R_+(z)^{-1}(Ce^{nl})^{\s_3/2}.
\end{eqnarray}

\begin{lem}\label{lem-K}
$J_K(z)=I+O(1/n)$ and $K(z)=I+O(1/n)$ as $n\to\infty$.
\end{lem}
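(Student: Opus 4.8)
The plan is to show that the jump matrix of $K$ on $\Sigma_K$ differs from the identity by $O(1/n)$ in the relevant norms and then to apply the standard small-norm theory for Riemann--Hilbert problems. By (\ref{JoK}), on each arc of $\Sigma_K$ we have $J_K=(Ce^{nl})^{-\s_3/2}\wt R_-J_R\wt R_+^{-1}(Ce^{nl})^{\s_3/2}$; writing $\wt R(z)=[Ce^{nl}]^{\s_3/2}\hat R(z)$, so that $\hat R$ absorbs the remaining matrix factors of (\ref{R'a})--(\ref{R'b}) together with the scalars $\sqrt\pi$ and $(-1)^{n+1}$, the conjugating factors cancel and $J_K=\hat R_-(z)J_R(z)\hat R_+(z)^{-1}$. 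I would estimate this arc by arc, distinguishing: (i) the arcs on which $R$ is analytic, namely $(-\infty,0]$ and the imaginary rays $\{\re z=0,\ |\im z|\ge\d\}$; (ii) the arcs interior to one of the two regions of (\ref{R'}), namely the lens arcs $\{\im z=\pm\d,\ \re z>0\}$ and the real segments $(0,1)$ and $(1,\infty)$; and (iii) the two vertical segments $\{\re z=0,\ |\im z|<\d\}$ and $\{\re z=1,\ |\im z|<\d\}$, across which the parametrix switches from (\ref{R'b}) to (\ref{R'a}).

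On the arcs of group (i), $J_R=I$, so $J_K$ is just the conjugated jump of $\hat R$. Across the imaginary rays the only discontinuity of $\hat R$ comes from $D(z)^2$ in (\ref{R'a}); by (\ref{JoD}) it contributes a diagonal matrix, conjugated by analytic factors, with entries $J_D^{\pm1}$, and $J_D=1-e^{\pm2i\pi(nz-\b/2)}=1+O(e^{-2\pi n|\im z|})$ there. Across $(-\infty,0]$ one must instead check that $\hat R$ is in fact analytic: its potential jumps there come from $(z-1)^{\frac{\b-1}2\s_3}$, from $z^{\b-1}$ inside the last factor of (\ref{R'a}), and from $\phi$ (hence from $F^{\s_3/4}$ and $\A(F)$). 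Using that $N(z)$ is analytic off $[a,b]$, the identity $\wt\phi=\phi\pm i\pi(1-z)$ of (\ref{phi-phi'}), and the Airy jump (\ref{JoA}), these contributions cancel, so $J_K\equiv I$ on $(-\infty,0]$. This is where the auxiliary function $D(z)$ of (\ref{D}), defined as the solution of the scalar problem with jump (\ref{JoD}), earns its keep: it makes the determination of the parametrix to the left of the imaginary axis compatible with $\wt\phi$, thereby removing what would otherwise be a genuine cut along $(-\infty,0]$.

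On the arcs of group (ii) I would insert (\ref{R'a}) or (\ref{R'b}) and note that the final diagonal factor is continuous across the arc, so that conjugating $J_R$ by it and using the identities in (\ref{naD}) together with $\D_\pm\na_\pm=e^{\pm2i\th}$ transforms the off-diagonal entries of $J_R$. On the lens arcs these become $O(e^{-2\pi\d n})$ because $\im\th=\pm n\pi\d$ there; on $(0,1)$ and $(1,\infty)$ the same step produces $1/(D^2W)$ or $D^2W$ to be matched against the jump of $\A(\wt F)$ or $\A(F)$, which, by the Airy jump (\ref{JoA}) and the jump (\ref{JoN}) of $N$ across $[a,b]$, cancels exactly, leaving $J_K-I$ of the form $(1-D^2W)\,\hat N_\pm E\,\hat N_\pm^{-1}$ with $E$ a fixed nilpotent matrix. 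By (\ref{D-asymp}) and Stirling's formula, $D=1+O(1/n)$ and $W=1+O(1/n)$, while $\hat N_\pm$ is uniformly bounded --- also near the turning points $a,b$, by the standard local analysis of the Airy parametrix --- so this is $O(1/n)$ away from the corners $z=0,1$, and exponentially small near them since there $\hat N$ carries a factor $e^{2n\wt\phi}$ (or $e^{2n\phi}$) with $\re\wt\phi<0$. On the segments of group (iii) the same conclusion follows after reconciling (\ref{R'b}) with (\ref{R'a}) through $\wt\phi=\phi\pm i\pi(1-z)$ and the jump (\ref{JoD}) of $D$. Collecting everything, and using that $J_K-I$ decays exponentially in both $n$ and $|z|$ along the unbounded arcs ($J_K\equiv I$ on $(-\infty,0]$; $|J_D-1|\le e^{-2\pi n|\im z|}$ on the rays; $e^{-nv(x)}=c^{nx}$ on $(1,\infty)$), we obtain $\|J_K-I\|_{L^1\cap L^2\cap L^\infty(\Sigma_K)}=O(1/n)$.

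Finally, $K$ is the unique solution of the Riemann--Hilbert problem with contour $\Sigma_K$, jump $J_K$, and normalization $K(\infty)=I$ by (\ref{K-asymp}). Since the Cauchy operator on $L^2(\Sigma_K)$ is bounded and $\|J_K-I\|=O(1/n)$, the associated singular integral operator $1-C_{J_K}$ is invertible for all large $n$ with inverse bounded uniformly in $n$, and
$$K(z)=I+\frac1{2\pi i}\int_{\Sigma_K}\frac{\big[(1-C_{J_K})^{-1}(J_K-I)\big](s)}{s-z}\,ds$$
gives $K(z)=I+O(1/n)$, uniformly for $z$ at a fixed positive distance from $\Sigma_K$. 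I expect the real work to be not this last, routine, step but the branch-and-sign bookkeeping of the preceding paragraphs --- in particular verifying the exact cancellation of all discontinuities of $\hat R$ along $(-\infty,0]$ and across $\{\re z=0,\ |\im z|<\d\}$, where (\ref{R'a}) and (\ref{R'b}) must be reconciled via $\wt\phi=\phi\pm i\pi(1-z)$ and the jump (\ref{JoD}) of $D$ --- since every stray factor $\pm i$, $e^{\pm i\pi\b}$ or $(-1)^{n}$ has to be tracked.
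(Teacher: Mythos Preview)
Your overall strategy---estimate $J_K-I$ arc by arc and then invoke small-norm Riemann--Hilbert theory---is exactly what the paper does, and your final paragraph matches the paper's appeal to a small-norm theorem (Theorem~3.8 of~[14]). However, two of your arc-by-arc claims are not correct as stated.

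First, your assertion that $J_K\equiv I$ exactly on $(-\infty,0]$ is wrong. The parametrix $\wt R$ of (\ref{R'a}) genuinely has a jump there: $\phi$ is cut along $(-\infty,b]$ with $\phi_+-\phi_-=2\pi i$, so $F_+\ne F_-$ (they differ by more than a phase, since $\tfrac32 n\phi_\pm$ differ by $3n\pi i$, an \emph{additive} shift, and the $2/3$ power does not convert this into a simple multiplicative factor). Consequently $\A(F_+)\ne\A(F_-)$, and the jumps of $(z-1)^{\frac{\b-1}2\s_3}$, $z^{\b-1}$, and $F^{\s_3/4}\A(F)$ do \emph{not} cancel exactly. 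The paper's Case~VIII handles this arc by inserting the Airy asymptotics (\ref{A-asymp1}) on each side, which reduces the bracketed Airy blocks to $\pi^{-1/2}[I+O(1/n)]$; the exact diagonal phase factors then collapse to $I$, leaving $J_K=I+O(1/n)$. Your proposed tools---the identity $\wt\phi=\phi\pm i\pi(1-z)$ and the Airy jump (\ref{JoA})---are not the relevant ones here: (\ref{R'a}) involves only $\phi$ and $F$, not $\wt\phi$, and (\ref{JoA}) is the jump of $\A$ across the \emph{real} $F$-axis, which is not what relates $F_+(x)$ to $F_-(x)$ for $x<0$.

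Second, your classification of the lens arcs $\{\im z=\pm\d,\ 0<\re z<1\}$ as ``interior to one region'' is off. These arcs lie on the boundary between the regions where (\ref{R'a}) and (\ref{R'b}) apply, so computing $J_K$ there requires reconciling the two formulas via $\wt\phi=\phi\pm i\pi(1-z)$---this is the paper's Case~V, which is structurally like your group~(iii), not~(ii). Relatedly, on all lens arcs the exponential smallness ultimately comes from $\re\phi$ (via (\ref{phi3})), not merely from $\im\th=\pm n\pi\d$; your one-line estimate misses the $e^{-2n\phi}$ factor that actually does the work after conjugation.

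Neither issue is fatal: once you replace ``exact cancellation'' by ``$I+O(1/n)$ via Airy asymptotics'' on $(-\infty,0]$, and treat the inner lens arcs alongside the vertical segments, your argument becomes the paper's.
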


%\begin{proof}
\noindent {\it Proof.} In view of the structure of the contour $\Sigma_{K}$, we divide our discussion into eight cases and consider each case separately.

Case I.
For ${\textup R{\textup e}}\,z=1$ and $\pm\im z\in[0,\d]$, we have from (\ref{JoR1}) and (\ref{naD}) that
$$J_R(z)=\left(\begin{matrix}
    1-e^{\pm2i\th}&\cfrac{Cz^{\b-1}B^2W}{2i\sin\th e^{\mp i\th+nv}}\\
    \\
    \cfrac{-2i\sin\th e^{\pm i\th+nv}}{Cz^{\b-1}B^2W}&1
  \end{matrix}\right).$$
This together with (\ref{R'}) and (\ref{JoK}) gives
\begin{eqnarray}\label{JoK1}
J_K(z)&=&N(z)(z-1)^{\frac{\b-1}2\s_3}
\left[\left(\begin{matrix}
    1&i \\
    i&1
  \end{matrix}\right)F(z)^{\s_3/4}\A(F)\right]
  \left(\begin{matrix}
    \cfrac{1-e^{\pm2i\th}}{2\sin\th}&-iD^2We^{\pm i\th}\\
    \\
    \cfrac{-ie^{\pm i\th}}{D^2W}&2\sin\th
  \end{matrix}\right)\nonumber\\
  &&\times \left[\left(\begin{matrix}
    1&i \\
    i&1
  \end{matrix}\right)\wt F(z)^{-\s_3/4}\s_1\A(\wt F)\s_1\right]^{-1}
  (1-z)^{\frac{1-\b}2\s_3}N(z)^{-1}(-1)^{n+1}.
\end{eqnarray}
On account of (\ref{F}), we obtain from (\ref{A-asymp2}) that as $n\rightarrow\infty$,
\begin{eqnarray*}
\left(\begin{matrix}
    1&i \\
    i&1
  \end{matrix}\right)F(z)^{\s_3/4}\A(F)=[I+O(\frac{1}{n})]\frac{e^{-n\phi\s_3}}{\sqrt\pi}\left(\begin{matrix}
    1&0 \\
    \mp1&1
  \end{matrix}\right)
\end{eqnarray*}
and
\begin{eqnarray*}
\left[\left(\begin{matrix}
    1&i \\
    i&1
  \end{matrix}\right)\wt F(z)^{-\s_3/4}\s_1\A(\wt F)\s_1\right]^{-1}
  =\sqrt\pi\left(\begin{matrix}
    1&\mp1   \\
    0&1
  \end{matrix}\right)e^{n\wt\phi\s_3}[I+O(\frac{1}{n})].
\end{eqnarray*}
From (\ref{theta}) and (\ref{phi-phi'}) we have
$$
e^{n\wt\phi}=(-1)^ne^{n\phi\mp i\th\mp i\pi\b/2}.
$$
As $n\to\infty$, applying Stirling's formula (cf. \cite[(6.1.40)]{AS70}) to (\ref{W}) yields
\begin{eqnarray}\label{W-asymp}
W(z)=1+O(1/n)
\end{eqnarray}
uniformly for $z$ bounded away from the negative real axis.
Applying the last four equations and (\ref{D-asymp}) to (\ref{JoK1}) gives
\begin{eqnarray*}
J_K(z)&=&N(z)(z-1)^{\frac{\b-1}2\s_3}
  \left(\begin{matrix}
    1&\pm(1-D^2W)e^{-2n\wt\phi\mp i\pi\b}\\
    \\
    \mp(1-D^{-2}W^{-1})e^{2n\phi}&1-e^{\pm2i\th}(2-D^2W-D^{-2}W^{-1})
  \end{matrix}\right)\nonumber\\
  &&\times(z-1)^{\frac{1-\b}2\s_3}N(z)^{-1}[I+O(\frac{1}{n})]
  \\&=&I+O(\frac{1}{n}).
\end{eqnarray*}
Here we have used the fact that in the present case, $\re\phi(z)\le0$ and $\re\wt\phi(z)\ge0$; see Figure \ref{fig-phi}.

Case II.
For $z=x\in[1,\infty)$, we have from (\ref{JoR2}) and (\ref{naD})
$$J_R(x)=\left(\begin{matrix}
    1&-Cx^{\b-1}B^2We^{-nv}
    \\
    0&1
  \end{matrix}\right).$$
This together with (\ref{JoA}), (\ref{R'}) and (\ref{JoK}) gives
\begin{eqnarray}\label{JoK2}
J_K(x)&=&\left[N(x)(x-1)^{\frac{\b-1}2\s_3}\left(\begin{matrix}
    1&i \\
    i&1
  \end{matrix}\right)F(x)^{\s_3/4}\A_-(F)\right]
  \left(\begin{matrix}
    1&1-D^2W
    \\
    0&1
  \end{matrix}\right)\nonumber\\
  &&\times \left[N(x)(x-1)^{\frac{\b-1}2\s_3}\left(\begin{matrix}
    1&i \\
    i&1
  \end{matrix}\right)F(x)^{\s_3/4}\A_-(F)\right]^{-1}.
\end{eqnarray}
Note that the matrices $N(z)$ and $F(z)^{\s_3/4}$ are both discontinuous across the interval $[1,b)$.
But a combination of them makes the jumps vanish.
Observe from Figure \ref{fig-phi} and (\ref{F}) that
$\arg\phi_\pm(x)=\pm3\pi/2$ and $\arg F_\pm(x)=\pm\pi$ for $x\in[1,b)$.
Thus, we have $F_+(x)^{\s_3/4}=F_-(x)^{\s_3/4}e^{i\pi\s_3/2}$.
It then follows from (\ref{JoN}) that the matrix
$$N(z)(z-1)^{\frac{\b-1}2\s_3}\left(\begin{matrix}
    1&i \\
    i&1
  \end{matrix}\right)F(z)^{\s_3/4}$$
has no jump on the interval $[1,b)$.
Applying (\ref{D-asymp}) and (\ref{W-asymp}) to (\ref{JoK2}) gives $J_K(x)=I+O(1/n)$.

Case III.
For $z=x\in[0,1]$, we can proceed in a similar manner as in Case II and obtain $J_K(x)=I+O(1/n)$.

Case IV.
For $z=\re z\pm i\d$ with $\re z\in(1,\infty)$, we have from (2.17b) and (\ref{naD})
$$J_R(z)=\left(\begin{matrix}
    1&\cfrac{Cz^{\b-1}B^2W}{2i\sin\th e^{\mp i\th+nv}}\\
    \\
    0&1
  \end{matrix}\right).$$
This together with (\ref{R'}) and (\ref{JoK}) gives
\begin{eqnarray}\label{JoK4}
J_K(z)&=&N(z)(z-1)^{\frac{\b-1}2\s_3}
\left[\left(\begin{matrix}
    1&i \\
    i&1
  \end{matrix}\right)F(z)^{\s_3/4}\A(F)e^{n\phi\s_3}\right]
  \left(\begin{matrix}
    1&\cfrac{D^2We^{-2n\phi\pm 2i\th}}{2i\sin\th e^{\pm i\th}}\\
    \\
    0&1
  \end{matrix}\right)\nonumber\\
  &&\times \left[\left(\begin{matrix}
    1&i \\
    i&1
  \end{matrix}\right)F(z)^{\s_3/4}\A(F)e^{n\phi\s_3}\right]^{-1}
  (z-1)^{\frac{1-\b}2\s_3}N(z)^{-1}.
\end{eqnarray}
Note from (\ref{phi1}) that in this case, we have $\arg F(z)\in (-\pi,\pi)$.
Thus, coupling (\ref{F}) and (\ref{A-asymp1}), we obtain
$$\left(\begin{matrix}
    1&i \\
    i&1
  \end{matrix}\right)F(z)^{\s_3/4}\A(F)e^{n\phi\s_3}=\frac1{\sqrt\pi}[I+O(\frac{1}{n})].$$
Applying this to (\ref{JoK4}) yields
\begin{eqnarray*}
J_K(z){=} I+O(\frac{1}{n}).
\end{eqnarray*}
Here we have used the facts that
$\mp2i\sin\th e^{\pm i\th}=1-e^{\pm2i\th}$ $\sim 1$ as $n\rightarrow\infty$
and $\re(-n\phi\pm i\th)=-n(\re\phi+\pi\d)<0$; see (\ref{theta}) and (\ref{phi3}).

Case V.
For $z=x \pm i\delta$ with $x \in(0,1)$, we have from (2.17a) and (\ref{naD})
$$J_R(z)=\left(\begin{matrix}
    1&0\\
    \\
    \cfrac{2i\sin\th e^{\pm i\th+nv}}{Cz^{\b-1}B^2W}&1
  \end{matrix}\right).$$
This together with (\ref{R'}) and (\ref{JoK}) gives
\begin{eqnarray}\label{JoK5}
J_K(z)& =&(-1)^{n+1}N(z)(1-z)^{\frac{\b-1}2\s_3}\left[\left(\begin{matrix}
    1&i \\
    i&1
  \end{matrix}\right)\wt F(z)^{-\s_3/4}\s_1\A(\wt F)\s_1\right]
  \left(\begin{matrix}
    2\sin\th&0
    \\
    iD^{-2}W^{-1}e^{\pm i\th}&(2\sin\th)^{-1}
  \end{matrix}\right)\nonumber\\
  &&\times \left[\left(\begin{matrix}
    1&i \\
    i&1
  \end{matrix}\right)F(z)^{\s_3/4}\A(F)\right]^{-1}
  (z-1)^{\frac{1-\b}2\s_3}N(z)^{-1}.
\end{eqnarray}
Note from (\ref{phi1}) that in this case, $\arg F(z)\in (-\pi,\pi)$ and $\arg\wt F(z)\in(-\pi,\pi)$. Thus,
we obtain from (\ref{F}) and (\ref{A-asymp1})
\begin{eqnarray*}
\left(\begin{matrix}
    1&i \\
    i&1
  \end{matrix}\right)\wt F(z)^{-\s_3/4}\s_1\A(\wt F)\s_1
  =[I+O(\frac{1}{n})]\frac{e^{-n\wt\phi\s_3}}{\sqrt\pi}
\end{eqnarray*}
and
\begin{eqnarray*}
\left[\left(\begin{matrix}
    1&i \\
    i&1
  \end{matrix}\right)F(z)^{\s_3/4}\A(F)\right]^{-1}=\sqrt\pi e^{n\phi\s_3}[I+O(\frac{1}{n})]
\end{eqnarray*}
Applying the last two equations to (\ref{JoK5}) yields
\begin{eqnarray*}
J_K(z)&=&N(z)(z-1)^{\frac{\b-1}2\s_3}
  \left(\begin{matrix}
    2(-1)^{n+1}\sin\th e^{n\phi-n\wt\phi\pm i\pi(1-\b)/2}&0\\
    \\
    i(-1)^{n+1}D^{-2}W^{-1}e^{n\phi+n\wt\phi\pm i\th\mp i\pi(1-\b)/2}&\cfrac{e^{n\wt\phi-n\phi\mp i\pi(1-\b)/2}}{2(-1)^{n+1}\sin\th}
  \end{matrix}\right)\nonumber\\
  &&\times
  (z-1)^{\frac{1-\b}2\s_3}N(z)^{-1}[I+O(\frac{1}{n})].
\end{eqnarray*}
Using (2.18), (3.2), (3.3) and (3.10), one can show that $$\re\{n\phi+n\wt\phi\pm i\th\}=\re\{2n\phi\}<0$$ and
$$2(-1)^{n+1}\sin\th e^{n\phi-n\wt\phi\pm i\pi(1-\b)/2}=\mp2i\sin\th e^{\pm i\th}=1-e^{\pm 2i\th}\sim1.$$
Thus, we again have
\begin{eqnarray*}
J_{k}(z)=I+O(\frac{1}{n}),\qquad\qquad {\textup a}{\textup s} \quad n\rightarrow\infty.
\end{eqnarray*}

Case VI.
For $z=\pm iy$ with $y \in(0,\d),$ we have from (\ref{JoR3}) and (\ref{naD})
$$J_R(z)=\left(\begin{matrix}
    1&0\\
    \\
    \cfrac{2i\sin\th e^{\pm i\th+nv}}{Cz^{\b-1}B^2W}&1
  \end{matrix}\right).$$
This together with (\ref{R'}) and (\ref{JoK}) gives
\begin{eqnarray}\label{JoK6}
J_K(z)&=&(-1)^{n+1}N(z)(1-z)^{\frac{\b-1}2\s_3}
\left[\left(\begin{matrix}
    1&i \\
    i&1
  \end{matrix}\right)\wt F(z)^{-\s_3/4}\s_1\A(\wt F)\s_1\right]
  \left(\begin{matrix}
    \cfrac{2D_-\sin\th}{D_+}&0\\
    \\
    \cfrac{ie^{\pm i\th}}{D_+D_-W}&\cfrac{D_+}{2D_-\sin\th}
  \end{matrix}\right)\nonumber\\
  &&\times \left[\left(\begin{matrix}
    1&i \\
    i&1
  \end{matrix}\right)F(z)^{\s_3/4}\A(F)\right]^{-1}
  (z-1)^{\frac{1-\b}2\s_3}N(z)^{-1}.
\end{eqnarray}
Note from (\ref{phi1}) that in this case, $\arg F(z)\in (-\pi,\pi)$ and $\arg\wt F(z)\in(-\pi,\pi)$. Thus, as in Case IV we have from (\ref{F}) and (\ref{A-asymp1})
\begin{eqnarray*}
\left(\begin{matrix}
    1&i \\
    i&1
  \end{matrix}\right)\wt F(z)^{-\s_3/4}\s_1\A(\wt F)\s_1
  =[I+O(\frac{1}{n})]\frac{e^{-n\wt\phi\s_3}}{\sqrt\pi}
\end{eqnarray*}
and
\begin{eqnarray*}
\left[\left(\begin{matrix}
    1&i \\
    i&1
  \end{matrix}\right)F(z)^{\s_3/4}\A(F)\right]^{-1}=\sqrt\pi e^{n\phi\s_3}[I+O(\frac{1}{n})].
\end{eqnarray*}
Applying the above two equations to (\ref{JoK6}) yields
\begin{eqnarray*}
J_K(z)&=&N(z)(z-1)^{\frac{\b-1}2\s_3}
  \left(\begin{matrix}
    \cfrac{2(-1)^{n+1}\sin\th}{J_De^{n\wt\phi-n\phi\mp i\pi(1-\b)/2}}&0\\
    \\
    O(e^{2n\re\phi})&\cfrac{J_De^{n\wt\phi-n\phi\mp i\pi(1-\b)/2}}{2(-1)^{n+1}\sin\th}
  \end{matrix}\right)\nonumber\\
  &&\times
  (z-1)^{\frac{1-\b}2\s_3}N(z)^{-1}[I+O(\frac{1}{n})].
\end{eqnarray*}
Here we have used (2.21), (3.20) and the asymptotic formula for $\Gamma(x\pm iy)$ as $y\rightarrow +\infty$. Since
$$2(-1)^{n+1}\sin\th e^{n\phi-n\wt\phi\pm i\pi(1-\b)/2}=1-e^{\pm 2i\th}=J_D$$
and $\re\phi(z)<0$, as before we again have $J_{K}(z)=I+O(1/n)$ as $n\rightarrow \infty$.
As mentioned in a statement following (3.21), the usage of $D(z)$ defined in (\ref{D}) is to cancel the jump
$1-e^{\pm 2i\th}=1-e^{\pm2i\pi(nz-\b/2)}$.
Without this function, the jump matrix $J_K(z)$ is not asymptotically equal to the identity matrix in this case.

Case VII.
For $\re z=0$ and $|\im z|\ge\d$, we have $J_R(z)=I$; see Figure 1. Thus, (\ref{R'}) and (\ref{JoK}) imply
\begin{eqnarray}\label{JoK7}
J_K(z)&=&\left[N(z)(z-1)^{\frac{\b-1}2\s_3}
\left(\begin{matrix}
    1&i \\
    i&1
  \end{matrix}\right)F(z)^{\s_3/4}\A(F)\right]
  J_D^{-\s_3}\nonumber\\
  &&\times
  \left[N(z)(z-1)^{\frac{\b-1}2\s_3}
\left(\begin{matrix}
    1&i \\
    i&1
  \end{matrix}\right)F(z)^{\s_3/4}\A(F)\right]^{-1}.
\end{eqnarray}
Note that by (\ref{JoD}), $J_D=1-e^{\pm2i\pi(nz-\b/2)}$ is exponentially small for $|\im z|\ge\d$.
From (\ref{JoK7}), it again follows that $J_K=I+O(1/n)$ in this case.

Case VIII.
For $z=x\in(-\infty,0)$, we have $J_R(x)=I$. Thus, (\ref{R'}) and (\ref{JoK}) imply
\begin{eqnarray}\label{JoK8}
J_K(z)&=&N(x)(1-x)^{\frac{\b-1}2\s_3}e^{\frac{i\pi(1-\b)}2\s_3}
\left[\left(\begin{matrix}
    1&i \\
    i&1
  \end{matrix}\right)F_-(x)^{\s_3/4}\A(F_-)e^{n\phi_-\s_3}\right]
  e^{i\pi(\b-1)\s_3}\nonumber\\
  &&\times
  \left[\left(\begin{matrix}
    1&i \\
    i&1
  \end{matrix}\right)F_+(x)^{\s_3/4}\A(F_+)e^{n\phi_+\s_3}\right]^{-1}
  (1-x)^{\frac{1-\b}2\s_3}e^{\frac{i\pi(1-\b)}2\s_3}N(x)^{-1}.
\end{eqnarray}
Here we have used the fact that $e^{n(\phi_+-\phi_-)}=e^{2ni\pi}=1$ for $x<0$; see (\ref{phi}).
Note from (\ref{phi1}) that $\arg F_\pm\in(-\pi,\pi)$ in this case.
Hence, we obtain from (\ref{F}) and (\ref{A-asymp1})
$$\left(\begin{matrix}
    1&i \\
    i&1
  \end{matrix}\right)F_\pm(x)^{\s_3/4}\A(F_\pm)e^{n\phi_\pm\s_3}
  =\frac1{\sqrt\pi}[I+O(\frac{1}{n})].$$
Applying the last equation to (\ref{JoK8}) yields
\begin{eqnarray*}
J_K(z)= N(x)(1-x)^{\frac{\b-1}2\s_3}e^{\frac{i\pi(1-\b)}2\s_3}
  e^{i\pi(\b-1)\s_3}
  e^{\frac{i\pi(1-\b)}2\s_3}(1-x)^{\frac{1-\b}2\s_3}N(x)^{-1}[I+O(\frac{1}{n})]=I+O(\frac{1}{n}).
\end{eqnarray*}

In conclusion, we have shown that $J_K(z)=I+O(1/n)$ on the contour of $K(z)$.
It is not difficult to verify that the multiplicative cyclic condition (3.30) in [14] holds for the jump matrix $J_K(z)$.
An application of Theorem 3.8 in [14] then gives $K(z)=I+O(1/n)$ as $n\to\infty$.\qquad $\blacksquare$
%\end{proof}

\begin{subequations}
Combining Lemma \ref{lem-K} with (\ref{P}), (\ref{B}), (\ref{Q}), (\ref{R}) and (\ref{K}), we obtain
\begin{eqnarray}\label{P-asymp1}
\pi_n(nz-\b/2)= n^nB(z)\wt R_{11}(z)[I+O(\frac{1}{n})]
\end{eqnarray}
for $\re z\notin[0,1]$ or $\im z\notin[-\d,\d]$, and
\begin{eqnarray}\label{P-asymp2}
\pi_n(nz-\b/2)= n^nB(z)[\wt R_{11}(z)+\D_\pm(z)\wt R_{12}(z)][I+O(\frac{1}{n})]
\end{eqnarray}
for $\re z\in(0,1)$ and $\im z\in(0,\pm\d)$.
\end{subequations}

\section{Main results}
\begin{thm}
As $n\to\infty$, we have
\begin{eqnarray}\label{pi-asymp1}
\pi_n(nz-\b/2)&=&n^n\sqrt\pi D(z)e^{nv(z)/2+nl/2}
    \bigg\{
  \frac{(\frac{\sqrt{z-a}+\sqrt{z-b}}2)^\b+(\frac{\sqrt{z-a}-\sqrt{z-b}}2)^\b}{z^{(\b-1)/2}(z-a)^{1/4}(z-b)^{1/4} F(z)^{-1/4}}\Ai(F)\nonumber\\
  &&-  \frac{(\frac{\sqrt{z-a}+\sqrt{z-b}}2)^\b-(\frac{\sqrt{z-a}-\sqrt{z-b}}2)^\b}{z^{(\b-1)/2}(z-a)^{1/4}(z-b)^{1/4} F(z)^{1/4}}\Ai'(F)\bigg\}[1+O(\frac{1}{n})]
\end{eqnarray}
for $\re z\notin[0,1]$ or $\im z\notin[-\d,\d]$, and
\begin{eqnarray}\label{pi-asymp2}
  \pi_n(nz-\b/2)&=&(-n)^n\sqrt\pi D(z)e^{nv(z)/2+nl/2}
    \nonumber\\
  &&\times\bigg\{
  \frac{(\frac{\sqrt{b-z}+\sqrt{a-z}}2)^\b+(\frac{\sqrt{b-z}-\sqrt{a-z}}2)^\b}{z^{(\b-1)/2}(b-z)^{1/4}(a-z)^{1/4}\wt F(z)^{-1/4}}
[\cos\th\Ai(\wt F)-\sin\th\Bi(\wt F)]
 \\
  &&\ \text{\,} +  \frac{(\frac{\sqrt{b-z}+\sqrt{a-z}}2)^\b-(\frac{\sqrt{b-z}-\sqrt{a-z}}2)^\b}{z^{(\b-1)/2}(b-z)^{1/4}(a-z)^{1/4}\wt F(z)^{1/4}}
  [\cos\th\Ai'(\wt F)-\sin\th\Bi'(\wt F)]\bigg\}[I+O(\frac{1}{n})] \nonumber
\end{eqnarray}
for $\re z\in(0,1)$ and $\im z\in(-\delta,\delta)$, where the constant $l$ is given in (3.4) and the functions $v(z), F(z)$ and $D(z)$ are respectively given in (2.19), (3.6) and (3.20).
The asymptotic formula on the boundary of the two regions can be obtained by taking limits from either side.
\end{thm}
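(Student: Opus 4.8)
The plan is to turn the asymptotic analysis into a direct computation. By Lemma~\ref{lem-K}, together with the chain of transformations $P\to Q\to R$ in Section~2 and the definition \eqref{K} of $K(z)$, formulas \eqref{P-asymp1} and \eqref{P-asymp2} already express $\pi_n(nz-\b/2)$ in terms of a single entry of the parametrix, namely $\wt R_{11}(z)$ for $\re z\notin[0,1]$ or $\im z\notin[-\d,\d]$, and the combination $\wt R_{11}(z)+\D_\pm(z)\wt R_{12}(z)$ for $\re z\in(0,1)$ and $\pm\im z\in(0,\d)$, each time with a multiplicative error $[I+O(1/n)]$. Since $\wt R(z)$ is the explicit matrix \eqref{R'a}--\eqref{R'b}, assembled from the matrix $N(z)$ of \eqref{N}, the Airy parametrix $\A$ of \eqref{A}, the scalar $D(z)$ of \eqref{D}, and the diagonal factors displayed there, the task reduces to multiplying these matrices out, reading off the relevant entries, and simplifying, using $W(z)=1+O(1/n)$ from \eqref{W-asymp} to discard the stray factor $W$ hidden in $\D_\pm$ of \eqref{naD}.

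First I would treat the region $\re z\notin[0,1]$ or $\im z\notin[-\d,\d]$. Starting from \eqref{P-asymp1} and \eqref{R'a}, I would compute the $(1,1)$-entry of $N(z)(z-1)^{\frac{\b-1}2\s_3}\left(\begin{smallmatrix}1&i\\i&1\end{smallmatrix}\right)F(z)^{\s_3/4}\A(F)$. In this product the factor $(z-1)^{\frac{\b-1}2\s_3}$ cancels the power $(z-1)^{(1-\b)/2}$ carried by the first column of $N(z)$ in \eqref{N}; the fixed matrix $\left(\begin{smallmatrix}1&i\\i&1\end{smallmatrix}\right)$ combines the two entries of that column (and, for the second column, the other two entries of $N(z)$) into the sum $(\tfrac{\sqrt{z-a}+\sqrt{z-b}}2)^\b+(\tfrac{\sqrt{z-a}-\sqrt{z-b}}2)^\b$ and the difference of the same two powers, all over $(z-a)^{1/4}(z-b)^{1/4}$; and the first column $\bigl(\Ai(F),\,i\Ai'(F)\bigr)^{T}$ of $\A(F)$, read from \eqref{A}, supplies $F^{1/4}\Ai(F)$ and $-F^{-1/4}\Ai'(F)$ with exactly the signs shown in \eqref{pi-asymp1}. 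Finally the outer factor $\sqrt\pi\,[Ce^{nl}]^{\s_3/2}$ and the diagonal factor $\bigl[D(z)^2 e^{nv(z)}/(Cz^{\b-1}B(z)^2)\bigr]^{\s_3/2}$ multiply together: the powers of $C$ cancel, the exponent $\tfrac12 nv+\tfrac12 nl$ and the factors $\sqrt\pi\,D(z)$ and $z^{-(\b-1)/2}$ survive, and one power of $B(z)^{-1}$ appears, which is killed by the $n^nB(z)$ in front of \eqref{P-asymp1}. Collecting everything produces \eqref{pi-asymp1}.

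For $\re z\in(0,1)$ and $\pm\im z\in(0,\d)$ I would proceed in the same way, now starting from \eqref{P-asymp2} and \eqref{R'b}, and computing both $\wt R_{11}$ and $\wt R_{12}$. The new ingredient is the factor $4\sin^2\th(z)$ buried in the diagonal matrix of \eqref{R'b} and the factor $2i\sin\th\,e^{\pm i\th+nv}$ in $\D_\pm$ of \eqref{naD}: on forming $\wt R_{11}+\D_\pm\wt R_{12}$ the powers of $\sin\th$ cancel and one is left with linear combinations of $1$ and $e^{\pm i\th}$ --- equivalently of $\cos\th=\tfrac12(e^{i\th}+e^{-i\th})$ and $\sin\th=\tfrac1{2i}(e^{i\th}-e^{-i\th})$ --- multiplying the entries of $\s_1\A(\wt F)\s_1$; expressing those entries through $\Ai,\Bi,\Ai',\Bi'$ via \eqref{A} yields precisely $\cos\th\,\Ai(\wt F)-\sin\th\,\Bi(\wt F)$ and $\cos\th\,\Ai'(\wt F)-\sin\th\,\Bi'(\wt F)$ as in \eqref{pi-asymp2}. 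What remains --- the cancellation of the powers $(1-z)^{\pm(\b-1)/2}$ against $N(z)$, the passage from $\sqrt{z-a},\sqrt{z-b}$ to $\sqrt{a-z},\sqrt{b-z}$ together with the phases $e^{\pm i\pi\b/2}$ this introduces, the cancellation of $B(z)$, the collection of the remaining constants, and the merging of the sign $(-1)^{n+1}$ of \eqref{R'b} with a further sign into the prefactor $(-n)^n$ --- is routine bookkeeping. The assertion about the common boundary of the two regions follows because the jump $J_R$ obeys $J_R=I+O(1/n)$ on the two vertical sides $\re z=0,1$ of the rectangle and on the horizontal segments $\im z=\pm\d$ with $\re z>0$ (Cases~I, IV, V and VI of the proof of Lemma~\ref{lem-K}); hence the formula coming from either half of the parametrix remains valid in a full one-sided neighbourhood of each side, so \eqref{pi-asymp1} and \eqref{pi-asymp2} are both asymptotic to $\pi_n(nz-\b/2)$ on an overlap containing the entire boundary, and on the boundary one may therefore take the limit of either one.

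I expect the main obstacle to be the inside-rectangle bookkeeping just sketched. Besides keeping careful track of all the square-root branches and of the phases $e^{\pm i\th}$, $e^{\pm i\pi\b/2}$ and $(-1)^{n}$, one must verify that the individually large quantities that surface at intermediate stages --- the factors $e^{\pm 2n\phi}$, $e^{\pm 2n\wt\phi}$ and the exponentially growing Airy term --- cancel, so that the stated $[1+O(1/n)]$ really is a \emph{relative} error. That cancellation is forced by the signs of $\re\phi$ and $\re\wt\phi$ inside the rectangle, which are recorded in Proposition~\ref{prop-phi} and Figure~\ref{fig-phi} and are used in exactly the same way in Cases~I and V of the proof of Lemma~\ref{lem-K}.
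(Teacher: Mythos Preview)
Your approach is the same as the paper's: it too reads off \eqref{pi-asymp1} directly from \eqref{R'a} and \eqref{P-asymp1}, and for \eqref{pi-asymp2} it sets $\wt Q:=\wt R\left(\begin{smallmatrix}1&0\\\D_\pm&1\end{smallmatrix}\right)$ and computes $\wt Q_{11}=\wt R_{11}+\D_\pm\wt R_{12}$ by exactly the matrix multiplications you describe. One refinement worth noting: rather than invoking $W=1+O(1/n)$ at the outset (which by \eqref{W-asymp} is not uniform as $z\to0$), the paper retains the residual factor $D^{-2}W^{-1}-1$ multiplying $\Ai(\wt F)$ and $\Ai'(\wt F)$ in the penultimate expression and then discards it by observing that for $z$ bounded away from $0$ one has $D^{-2}W^{-1}-1=O(1/n)$, while for $z$ near $0$ the functions $\Ai(\wt F),\Ai'(\wt F)$ are themselves exponentially small since $\wt F(0)\sim(-\tfrac34 n\log c)^{2/3}\to+\infty$.
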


%\begin{proof}
\noindent {\it Proof.} From (\ref{N}), (\ref{A}), (\ref{R'}) and (\ref{P-asymp1}), it is easy to obtain (\ref{pi-asymp1}).
We now prove (\ref{pi-asymp2}).
Define
\begin{eqnarray}\label{Q'}
\wt Q(z):=\wt R(z)\left(\begin{matrix}
    1&0 \\
    \D_\pm(z)&1
  \end{matrix}\right).
\end{eqnarray}
From (\ref{P-asymp2}), we have
\begin{eqnarray}\label{pi-Q'}
\pi_n(nz-\b/2)= n^nB(z)\wt Q_{11}(z)[I+O(\frac{1}{n})].
\end{eqnarray}
Thus, we only need to calculate $\wt Q_{11}(z)$.
First, we observe from (\ref{naD}), (\ref{R'}) and (\ref{Q'}) that
\begin{eqnarray*}
\wt Q(z)&=&(-1)^{n+1}\sqrt\pi[Ce^{nl}]^{\s_3/2}N(z)(1-z)^{\frac{\b-1}2\s_3}\left(\begin{matrix}
    1&i \\
    i&1
  \end{matrix}\right)\wt F(z)^{-\s_3/4}\s_1\A(\wt F)\s_1
  \nonumber\\&&\times
  \left(\begin{matrix}
    1&0 \\
    \\
    \cfrac{ie^{\pm i\th(z)}}{2D^2(z)W(z)\sin\th(z)}&1
  \end{matrix}\right)
  \left[\frac{4\sin^2\th(z)D(z)^2e^{nv(z)}}{Cz^{\b-1}B(z)^2}\right]^{\s_3/2}.
\end{eqnarray*}
Second, (\ref{N}) gives
\begin{eqnarray*}
&&N(z)(1-z)^{\frac{\b-1}2\s_3}\left(\begin{matrix}
    1&i \\
    i&1
  \end{matrix}\right)
  \\&=&
  \left(\begin{matrix}
    \cfrac{(\frac{\sqrt{b-z}+\sqrt{a-z}}2)^\b-(\frac{\sqrt{b-z}-\sqrt{a-z}}2)^\b}{(b-z)^{1/4}(a-z)^{1/4}}&
    \cfrac{(\frac{\sqrt{b-z}+\sqrt{a-z}}2)^\b+(\frac{\sqrt{b-z}-\sqrt{a-z}}2)^\b}{-i(b-z)^{1/4}(a-z)^{1/4}}\\
    \\
    \cfrac{(\frac{\sqrt{b-z}+\sqrt{a-z}}2)^{2-\b}-(\frac{\sqrt{b-z}-\sqrt{a-z}}2)^{2-\b}}{-i(b-z)^{1/4}(a-z)^{1/4}}&
    \cfrac{(\frac{\sqrt{b-z}+\sqrt{a-z}}2)^{2-\b}+(\frac{\sqrt{b-z}-\sqrt{a-z}}2)^{2-\b}}{(b-z)^{1/4}(a-z)^{1/4}}
  \end{matrix}\right).
\end{eqnarray*}
Finally, (\ref{A}) implies
\begin{eqnarray*}
&&\s_1\A(\wt F)\s_1\left(\begin{matrix}
    1&0 \\
    \\
    \cfrac{ie^{\pm i\th}}{2D^2W\sin\th}&1
  \end{matrix}\right)(2\sin\th)^{\s_3}
  \\&=&
  \left(\begin{matrix}
    \Bi'(\wt F)&i\Ai'(\wt F)
    \\
    -i\Bi(\wt F)&\Ai(\wt F)
  \end{matrix}\right)\left(\begin{matrix}
    1/2&0
    \\
    \pm1/2&1
  \end{matrix}\right)\left(\begin{matrix}
    2\sin\th&0
    \\
    iD^{-2}W^{-1}e^{\pm i\th}&(2\sin\th)^{-1}
  \end{matrix}\right)
  \\&=&\left(\begin{matrix}
    \sin\th\Bi'(\wt F)-[\cos\th+(D^{-2}W^{-1}-1)e^{\pm i\th}]\Ai'(\wt F)&\cfrac{i\Ai'(\wt F)}{2\sin\th} \\
    \\
    -i\{\sin\th\Bi(\wt F)-[\cos\th+(D^{-2}W^{-1}-1)e^{\pm i\th}]\Ai(\wt F)\}&\cfrac{\Ai(\wt F)}{2\sin\th}
  \end{matrix}\right).
\end{eqnarray*}
Applying the last three equations to (\ref{pi-Q'}) gives
\begin{eqnarray*}
  \pi_n(nz-\b/2)&=&(-n)^n\sqrt\pi D(z)e^{nv(z)/2+nl/2}
  \bigg\{
  \frac{(\frac{\sqrt{b-z}+\sqrt{a-z}}2)^\b+(\frac{\sqrt{b-z}-\sqrt{a-z}}2)^\b}{z^{(\b-1)/2}(b-z)^{1/4}(a-z)^{1/4}\wt F(z)^{-1/4}}
  \nonumber\\
  &&\times[\cos\th\Ai(\wt F)-\sin\th\Bi(\wt F)+e^{\pm i\th}(D^{-2}W^{-1}-1)\Ai(\wt F)]
  \nonumber\\
  &&+  \frac{(\frac{\sqrt{b-z}+\sqrt{a-z}}2)^\b-(\frac{\sqrt{b-z}-\sqrt{a-z}}2)^\b}{z^{(\b-1)/2}(b-z)^{1/4}(a-z)^{1/4}\wt F(z)^{1/4}}
  \nonumber\\
  &&\times[\cos\th\Ai'(\wt F)-\sin\th\Bi'(\wt F)+e^{\pm i\th}(D^{-2}W^{-1}-1)\Ai'(\wt F)]\bigg\}[I+O(\frac{1}{n})].
\end{eqnarray*}
On account of (\ref{phi}), (\ref{F}), and (\ref{Airy}), $\Ai(\wt F)$ and $\Ai'(\wt F)$ are exponentially small when $z$ approaches the origin; by (\ref{D-asymp}) and (\ref{W-asymp}), we also have
$D^{-2}W^{-1}-1=O(1/n)$ for $z\neq0$.
Since we can always neglect the terms $(D^{-2}W^{-1}-1)\Ai(\wt F)$ and $(D^{-2}W^{-1}-1)\Ai'(\wt F)$,
formula (5.2) is proved.
%\end{proof}

To justify that the asymptotic formula on the curve separating the two regions can be obtain by taking limits from either side, we just note that the regions of validity of both formulas (5.1) and (5.2) can be slightly extended beyond their boundaries, and that in the overlapping region these two formulas are asymptotically equal. \qquad $\blacksquare$

\begin{rem}
We would like to mention that our results coincide with those obtained in \cite{JW98,JW99}.
The formulas (6.9) in [8] and (2.35) in [9] are asymptotically equal to (\ref{pi-asymp1}) {\textup {in the present paper}},
while the formulas (6.27) in [8] and (4.19) in [9] are asymptotically equal to (\ref{pi-asymp2}).
\end{rem}

\nocite{*}

\end{document}